\theoremstyle{plain}
\numberwithin{equation}{section} 
\newtheorem{theorem}[equation]{Theorem}
\newtheorem{corollary}[equation]{Corollary}
\newtheorem{lemma}[equation]{Lemma}
\newtheorem{proposition}[equation]{Proposition}
\theoremstyle{definition}
\newtheorem{definition}[equation]{Definition}
\newtheorem{remark}[equation]{Remark}
\newtheorem{example}[equation]{Example}
\newtheorem{question}[equation]{Question}
\newtheoremstyle{dotless}{}{}{}{}{\bfseries}{}{ }{}
\theoremstyle{dotless}
\newcommand{\nth}{\ensuremath{^{\textrm{th}}}}
\newcommand{\fst}{\ensuremath{^{\textrm{st}}}}
\newcommand{\from}{\colon} 
\newcommand{\calB}{\mathcal{B}}
\newcommand{\ZZ}{\mathbb{Z}}
\newcommand{\defeq}{\colonequals}
\newcommand{\isom}{\cong} 
\DeclareMathOperator{\inte}{int}
\DeclareMathOperator{\Mod}{Mod^+} 
\DeclareMathOperator{\lcm}{lcm}
\DeclareMathOperator{\Id}{Id}
\newcommand{\quotient}[2]{{\raisebox{0.2em}{$#1$}\left/\raisebox{-0.2em}{$#2$}\right.}}
\DeclareMathOperator{\Lk}{Lk} 
\DeclareMathOperator{\Conway}{\nabla} 
\title{The monodromies of homogeneous links}
\author{Mark Bell\\
Mathematics Institute\\
Zeeman Building\\
University of Warwick\\
Coventry, CV4 7AL,\\
\texttt{m.c.bell@warwick.ac.uk}}
\begin{document}

\maketitle

\begin{abstract}
We show that there are only finitely many homogeneous links whose Conway polynomial has any given degree. Using this we give an example of an inhomogeneous, fibred knot. Secondly, we show how to compute the monodromy of a homogeneous link complement from a homogeneous braid word representative.
\end{abstract}

\keywords{Mapping torus; homogeneous link; fibred link; monodromy.}

\ccode{57M25}

\section{Introduction}

For a compact, orientable surface $S$, possibly with boundary, the \emph{mapping class group} of $S$ is the group of orientation preserving self-homeomorphisms of $S$ up to isotopy. This is denoted $\Mod(S)$. We use these mapping classes to build 3--manifolds from $S$ in the following way.

\begin{definition}
For $f \in \Mod(S)$ the \emph{mapping torus} $M_f$ is the 3--manifold
\[M_f \defeq \quotient{S \times [0, 1]}{(x, 1) \sim (\phi(x), 0)}\]
where $\phi \in f$ is any representative.
\end{definition}
It can be shown that, up to homeomorphism, this 3--manifold is independent of the choice of representative $\phi$. We say that $f$ is the \emph{monodromy} of $M_f$ and $S$ is its \emph{fibre}.

A mapping torus is entirely determined by its fibre and monodromy. Stallings determined exactly which 3--manifolds are mapping tori.

\begin{theorem}[{\cite[Theorem 2]{Stallings_Fibring}}]
\label{thrm:Stallings}
A compact, orientable 3--manifold $M$ is a mapping torus if and only if there exists an epimorphism $\phi \from \pi_1(M) \to \ZZ$ such that the kernel of $\phi$ is finitely generated. \qed
\end{theorem}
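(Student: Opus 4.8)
The plan is to prove the two implications separately; the forward direction is routine, so the real content is the converse. If $M = M_f$ with fibre $S$, then collapsing the $[0,1]$--coordinate exhibits $M$ as the total space of a surface bundle $S \hookrightarrow M \to S^1$, and the long exact sequence of this fibration yields a short exact sequence $1 \to \pi_1(S) \to \pi_1(M) \xrightarrow{\phi} \ZZ \to 1$. This $\phi$ is the desired epimorphism, and its kernel $\pi_1(S)$ is finitely generated because $S$ is a compact surface.

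For the converse, suppose $\phi \from \pi_1(M) \to \ZZ$ is an epimorphism with finitely generated kernel $K$. After deleting $S^3$ connected summands (which changes neither $\pi_1(M)$ nor the conclusion) and disposing of $S^2 \times S^1$ directly as $M_{\mathrm{id}}$, I may assume $M$ is irreducible. Since $\ZZ$ is the fundamental group of $S^1$, the homomorphism $\phi$, viewed as a class in $H^1(M;\ZZ) = [M,S^1]$, is represented by a smooth map $p \from M \to S^1$; taking a regular value and making $p$ transverse to it, the preimage $\Sigma \defeq p^{-1}(\mathrm{pt})$ is a compact, properly embedded, two--sided surface dual to $\phi$. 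Because $\phi$ is surjective this class is primitive, so after repeatedly compressing and $\partial$--compressing $\Sigma$ using the loop theorem and Dehn's lemma I may take $\Sigma$ to be incompressible, $\partial$--incompressible, connected and non--separating, still dual to $\phi$. Let $N$ be the compact manifold obtained by cutting $M$ along $\Sigma$; it carries two copies $\Sigma_{\pm}$ of $\Sigma$ in its boundary, and the infinite cyclic cover $\widetilde{M} \to M$ associated with $K = \ker\phi$ is obtained by gluing $\ZZ$ copies $N_i$ of $N$ end to end along these copies, with the deck group $\ZZ$ acting by the shift.

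Now the hypothesis is brought in: $\pi_1(\widetilde{M}) \isom K$ is finitely generated. Writing $\pi_1(\widetilde{M})$ as the infinite iterated amalgam $\cdots *_{\pi_1\Sigma} \pi_1 N *_{\pi_1\Sigma} \pi_1 N *_{\pi_1\Sigma} \cdots$, in which each copy of $\pi_1\Sigma$ injects by incompressibility, a combinatorial group theory argument shows that finite generation of this amalgam forces at least one of the two boundary inclusions $\pi_1 \Sigma \rightrightarrows \pi_1 N$ to be surjective: otherwise a finite generating set of $K$ lies in finitely many of the $N_i$, and a ping--pong argument along the line of the $N_i$'s produces elements of $K$ outside their span. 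Hence $\pi_1\Sigma \to \pi_1 N$ is an isomorphism for one, and therefore for both, of the boundary inclusions. It then remains to invoke the $3$--manifold fact that a compact, orientable, irreducible $3$--manifold $N$ with a boundary component $\Sigma$ for which $\pi_1 \Sigma \to \pi_1 N$ is an isomorphism is homeomorphic to $\Sigma \times [0,1]$; this is where Waldhausen's theorem on Haken manifolds (or a relative $h$--cobordism argument, $N$ now being aspherical with the homotopy type of $\Sigma$) enters. Regluing the two ends of $\Sigma \times [0,1]$ by the identification that recovered $M$ from $N$ gives $M = \Sigma\times[0,1]/(x,1)\sim(f(x),0) = M_f$ for the gluing homeomorphism $f$.

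The principal obstacle is the last paragraph: converting ``finitely generated kernel'' into ``$\pi_1\Sigma \twoheadrightarrow \pi_1 N$'' by combinatorial group theory, and then upgrading that $\pi_1$--isomorphism to an honest product structure on $N$ --- essentially the entire weight of Papakyriakopoulos's loop and sphere theorems and of Waldhausen's rigidity results is felt here. A secondary but genuine difficulty is arranging $\Sigma$ to be incompressible and connected in the first place, since the infinite--amalgam picture of $\pi_1(\widetilde{M})$ is only valid once that normalisation has been carried out.
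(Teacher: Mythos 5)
The paper does not actually prove this statement --- it is quoted from Stallings with a citation --- so the only meaningful comparison is with Stallings's original argument, which is precisely what you have reconstructed. Your outline (a compact surface $\Sigma$ dual to $\phi$, compressed to be incompressible; the infinite cyclic cover as a bi-infinite amalgam of copies of $N$, the manifold obtained by cutting $M$ along $\Sigma$; finite generation of the kernel forcing $\pi_1\Sigma \to \pi_1 N$ to be an isomorphism; and the Stallings--Waldhausen product theorem upgrading this to $N \cong \Sigma \times [0,1]$) is the correct and standard proof, and the forward direction is as routine as you say.

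Two steps are thinner than they should be. First, the reduction to the irreducible case: deleting $S^3$ summands does not make $M$ irreducible, since $M$ could a priori be a connected sum of two non-simply-connected primes. You must exclude this using the hypothesis, e.g.\ via Kneser--Milnor together with the fact that a nontrivial finitely generated normal subgroup of infinite index in a nontrivial free product cannot exist, so that a finitely generated $\ker\phi$ forces $\pi_1(M)$ to be freely indecomposable or infinite cyclic; note also that discarding homotopy-sphere summands silently invokes the Poincar\'e conjecture, which is why Stallings states the theorem with irreducibility (and the exclusion of a $\ZZ/2$ kernel) as hypotheses, as strictly this paper should too. Second, the amalgam step: the clean argument is that $\pi_1(\widetilde{M})$ is the increasing union of the groups $\pi_1(N_{-k}\cup\cdots\cup N_k)$, so finite generation makes this union stabilize, and the normal form theorem for amalgamated products (edge-group injectivity being exactly where incompressibility of $\Sigma$ is used) shows that stabilization at each end forces the corresponding inclusion $\pi_1\Sigma \to \pi_1 N$ to be surjective. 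This yields both surjections at once; your version --- ``at least one is surjective, hence an isomorphism for one and therefore for both'' --- leaves the ``therefore for both'' unjustified as written. With these two repairs the proof is complete and agrees with Stallings's own.
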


Additionally, Stallings determined that $G$, the kernel of the epimorphism $\phi$ in Theorem~\ref{thrm:Stallings}, is the fundamental group of the fibre \cite[Theorem 2]{Stallings_Fibring}. Hence the fibre of a mapping torus is uniquely determined, for example, by the rank of $G$ (and whether or not the manifold is closed when $G$ has rank zero). We consider the remaining problem of determining the monodromy:

\begin{question}
\label{qst:determine_monodromy}
For which classes of mapping tori can we give useful expressions for the monodromy?
\end{question}

In Section~\ref{sec:Monodromies} we give an answer to this in the case when the 3--manifold is the complement of a homogeneous link in $S^3$.

Homogeneous links arise as the closure of a homogeneous braid \cite[page 57]{Stallings_Constructions}. These links were also studied by Stallings who showed that their complements are all mapping tori \cite[Theorem 2]{Stallings_Constructions}. In Section~\ref{sec:Homogeneous} we sketch his proof, a key detail of which is a decomposition of the fibre surface under the Murasugi sum. This decomposition is essential to Section~\ref{sec:Monodromies}; we determine the monodromy on each piece and combine these together using a result of Gabai \cite[Corollary 1.4]{GabaiII}.

In Section~\ref{sec:Shift_Map} we introduce the shift maps and show that they can be used to simplify a braid whilst preserving its closure. The simplest braids are the non-weak braids. Dasbach and Mangum showed that the degree of the HOMFLY polynomial of homogeneous link is related to the underlying homogeneous braid \cite[Proposition 4.1.1]{DasbachMangum}. In Proposition~\ref{prop:homogeneous_degree_condition} we show a similar result for the degree of the Conway polynomial of a homogeneous link. Combining this with the class of non-weak braids we obtain that there are only finitely many homogeneous links whose Conway polynomial has any given degree in Theorem~\ref{thrm:homogeneous_degree_finite}. Similarly, in Theorem~\ref{thrm:homogeneous_genus_finite}, we obtain that there are only finitely many homogeneous knots of any given genus.

By enumerating these we compute all possible homogeneous links whose Conway polynomial has degree at most three. These are listed in Corollary~\ref{cor:small_degree_links}. By similar analysis we compute all possible homogeneous knots with genus at most two. These are listed in Corollary~\ref{cor:small_genus_links}.

From this classification we determine that the $8_{20}$ knot \cite[Appendix~C]{Rolfsen} is an inhomogeneous, fibred knot.

\section{Homogeneous link complements}
\label{sec:Homogeneous}

Recall that the set of braids on $n$ strands form a group under concatenation called the \emph{braid group}. This is denoted $B_n$ and has a standard presentation
\[B_n \isom \langle \sigma_1, \ldots, \sigma_{n-1} ~|~ \sigma_i \sigma_{i+1} \sigma_i = \sigma_{i+1} \sigma_i \sigma_{i+1},~\sigma_i \sigma_j = \sigma_j \sigma_j~\textrm{if}~|i-j| > 1 \rangle.\]
Here $\sigma_i$ corresponds to the braid in which the $i\nth$ strand passes under the $(i+1)\fst$ strand. We also denote the set of \emph{braid words} on $n$ strands by $\calB_n$; this is the Kleene closure of $\{\sigma_1, \ldots, \sigma_{n-1}, \sigma_1^{-1}, \ldots, \sigma_{n-1}^{-1}\}$. Thus $B_n$ is the equivalence classes of $\calB_n$ under the relations of the braid group and the relation that $\sigma_i \sigma_i^{-1} = \epsilon$. We denote the braid class of a braid word $w \in \calB_n$ by $[w] \in B_n$. 

\begin{definition}
A braid word $w = w_1 \cdots w_m \in \calB_n$ is \emph{homogeneous} if for each $i$, the generator $\sigma_i$ appears in $w$ if and only if $\sigma_i^{-1}$ does not \cite[page~57]{Stallings_Constructions}.
\end{definition}

For a homogeneous braid word $w = w_1 \cdots w_m \in \calB_n$ we denote the sign of the exponent with which $\sigma_i$ appears in $w$ by $\alpha(i)$. Additionally, we denote the strand index of $w_i$ by $x(i)$, that is \[w_i = \sigma_{x(i)}^{\alpha(x(i))}.\]

We may take the \emph{braid closure} of a braid $\sigma$ to obtain a link $\beta(\sigma)$. For ease of notation, for a braid word $w$ we abbreviate $\beta([w])$ to $\beta(w)$.

\begin{definition}
A link $K$ is \emph{homogeneous} if there exists a homogeneous braid word $w$ such that $\beta(w) = K$.
\end{definition}

Note that this definition is much stronger than Cromwell's definition of a link being homogeneous \cite[page 536]{Cromwell}.

Associated to a homogeneous braid word $w = w_1 \cdots w_m \in \calB_n$ is an oriented surface $S(w)$ embedded in $S^3$. This is obtained by connecting $n$ disks together via $m$ half-twisted bands corresponding to the $w_i$, see Figure~\ref{fig:fibre_surface}. This surface has the property that $\partial S(w) = \beta(w)$ and naturally decomposes under the Murasugi sum, see Figure~\ref{fig:fibre_surface_decompose}.

\begin{figure}[ht]
\begin{minipage}{0.45\linewidth}
\centering
\includegraphics[height=0.85\linewidth]{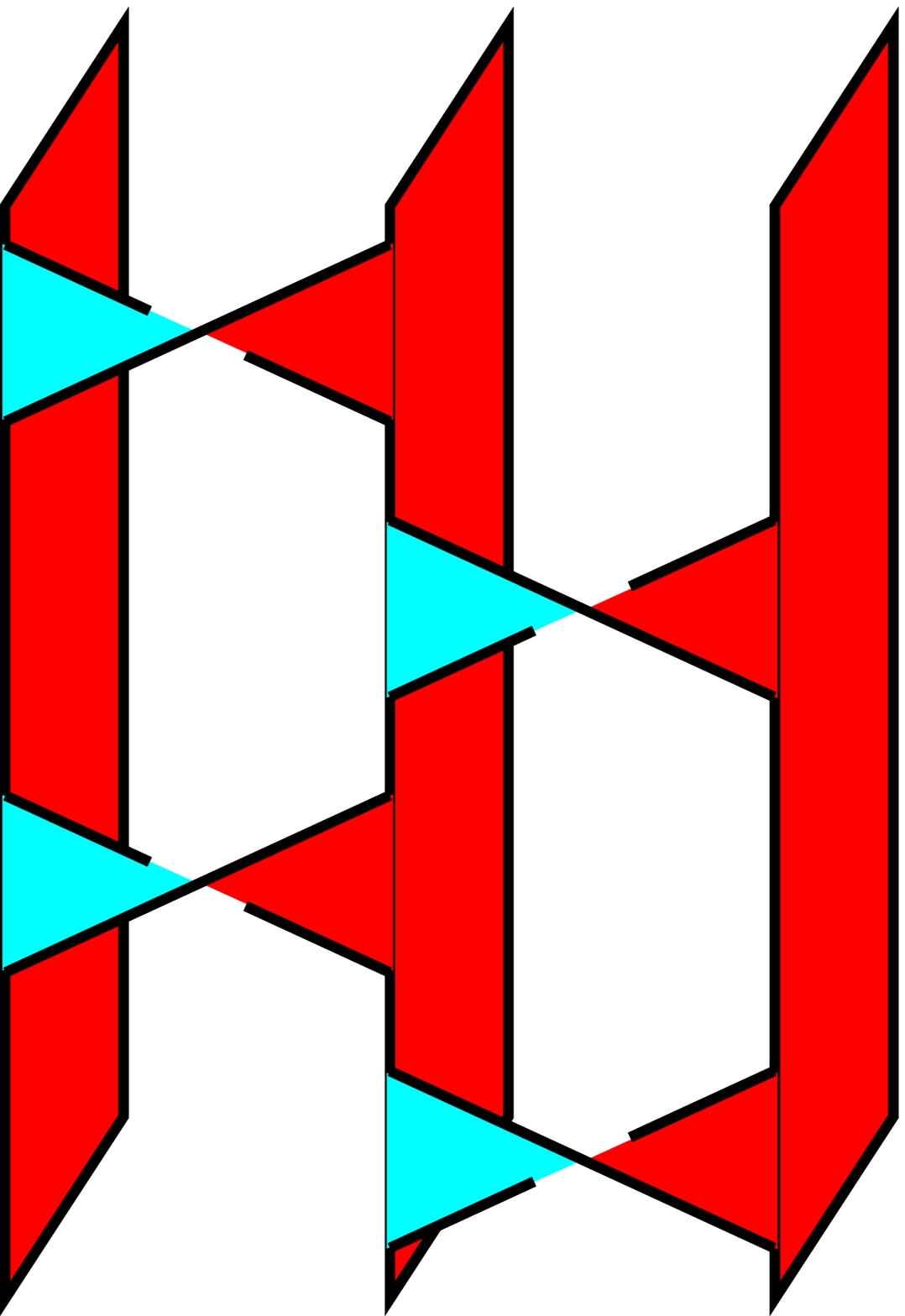}
\caption{The surface $S(\sigma_1 \sigma_2^{-1} \sigma_1 \sigma_2^{-1})$.}
\label{fig:fibre_surface}
\end{minipage}
\quad
\begin{minipage}{0.45\linewidth}
\centering
\includegraphics[height=0.85\linewidth]{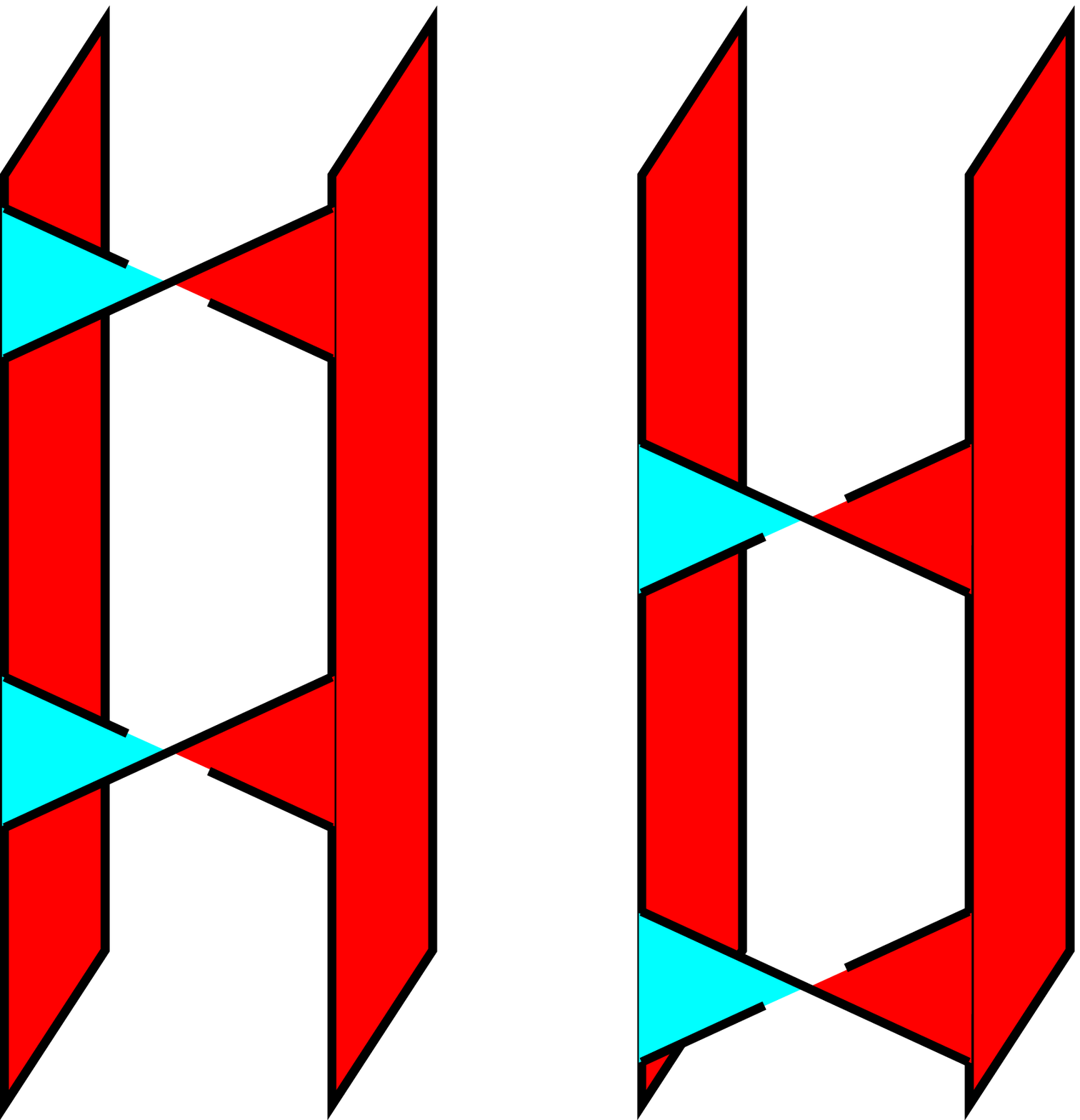}
\caption{The decomposition under Murasugi sum of $S(\sigma_1 \sigma_2^{-1} \sigma_1 \sigma_2^{-1})$.}
\label{fig:fibre_surface_decompose}
\end{minipage}
\end{figure}

\begin{definition}
Let $S$, $S_1$ and $S_2$ be compact, connected, oriented surfaces in $S^3$. If 
\begin{enumerate}
\item $S = S_1 \cup S_2$,
\item $S_1 \cap S_2 = D$ is a $2n$-gon whose edges are alternately in $\partial S_i$ and $\inte(S_i)$ for $i = 1, 2$, and
\item there is a sphere splitting $S^3$ into $B_1 \cup_\partial B_2$ such that
	\begin{enumerate}
	\item $S_1 \subseteq B_1$,
	\item $S_2 \subseteq B_2$, and
	\item $\partial B_1 \cap S_1 = D = \partial B_2 \cap S_2$
	\end{enumerate}
\end{enumerate}
then $S$ is the \emph{Murasugi sum} of $S_1$ and $S_2$ and we write $S = S_1 \#_M S_2$, see Figure~\ref{fig:Murasugi_Sum}.
\end{definition}

\begin{figure}[ht]
\centering
\includegraphics[height=0.3\linewidth]{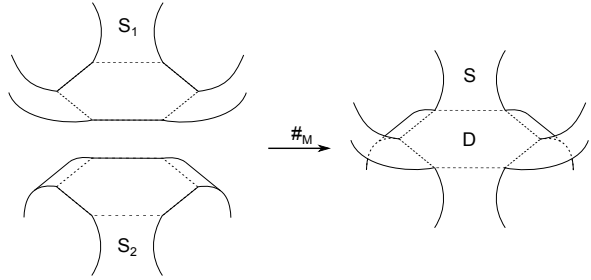}
\caption{The Murasugi sum of two surfaces when $n = 3$.}
\label{fig:Murasugi_Sum}
\end{figure}

As is standard, a link $K$ is \emph{fibred} if its complement $S^3 - n(K)$ is a mapping torus. 

\begin{theorem}[{\cite[Theorem 2]{Stallings_Constructions}}]
\label{thrm:homogeneous_fibred}
Every homogeneous link is fibred.
\end{theorem}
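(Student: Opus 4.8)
\emph{Proof proposal.} The plan is to realise the Seifert surface $S(w)$ of a homogeneous braid word $w = w_1 \cdots w_m \in \calB_n$ as an iterated plumbing of Hopf bands onto a disk, and then to invoke Stallings' theorem that plumbing a Hopf band onto a fibre surface again yields a fibre surface \cite{Stallings_Constructions} (the statement for arbitrary Murasugi sums, which we shall not need here, is Gabai's \cite[Corollary~1.4]{GabaiII}). Since a disk is a fibre surface (the unknot is fibred), it will follow that $S(w)$ is a fibre surface and hence that $\beta(w)$ is fibred. As every homogeneous link is some such $\beta(w)$, this proves the theorem.

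First I would record the basic features of $S(w)$. Homogeneity forces every generator $\sigma_1, \dots, \sigma_{n-1}$ to occur in $w$, each with a single sign $\alpha(i)$; in particular consecutive disks of $S(w)$ are joined by a band, so $S(w)$ is connected. That $\partial S(w) = \beta(w)$ is immediate from the construction, and counting the $n$ disks against the $m$ bands gives $\chi(S(w)) = n - m$, so $b_1(S(w)) = m - (n-1)$, which will be the number of Hopf bands to be peeled off.

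The argument proper is an induction on $m$. If $m = n - 1$ then each $\sigma_i$ occurs exactly once, so $S(w)$ is a disk and $\beta(w)$ is the unknot: the base case. If $m > n - 1$, then some $\sigma_i$ occurs at least twice; I would choose two occurrences that are consecutive among those of $\sigma_i$ and let $w''$ be $w$ with one of these two deleted. Then $w'' \in \calB_n$ is again a homogeneous braid word, with one fewer letter, and I would show that $S(w)$ is obtained from $S(w'')$ by plumbing on a Hopf band $H$ of sign $\alpha(i)$: namely, the annulus formed by disks $i$ and $i+1$ and the two chosen half-twisted bands carries one full twist and so is a Hopf band, the plumbing square $D$ being a rectangle that cuts across $H$ on one pair of opposite edges and, on the other pair, is a neighbourhood in $S(w'')$ of the surviving band together with the adjacent parts of disks $i$ and $i+1$. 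By the inductive hypothesis $S(w'')$ is a fibre surface, hence so is $S(w)$, and the induction is complete.

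The hard part is the verification of this plumbing identity $S(w) = S(w'') \#_M H$: one must produce the splitting sphere of $S^3$ and the square $D$ of the Murasugi sum definition and check that the four edges of $D$ alternate between the two pieces as required. The delicate point is the behaviour of the $\sigma_{i\pm 1}$-bands attached to disks $i$ and $i+1$ between the two chosen occurrences of $\sigma_i$; bands of $\sigma_j$ with $|i - j| \ge 2$ commute out of the way and are harmless. This is precisely the decomposition of the fibre surface under the Murasugi sum advertised in the introduction and depicted in Figure~\ref{fig:fibre_surface_decompose}; once it is in place, the induction together with Stallings' plumbing theorem gives the result.
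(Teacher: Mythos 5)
Your strategy---realise $S(w)$ as an iterated plumbing of Hopf bands onto a disk and quote Stallings' plumbing theorem---is a legitimate and well-known route to this result, and it genuinely differs from the paper's. The paper decomposes $S(w)$ \emph{column by column}: it writes $S(w) = S_1 \#_M \cdots \#_M S_{n-1}$ where $S_i = S\bigl((\sigma_1^{\alpha(i)})^{q_i}\bigr)$ is the whole block of $\sigma_i$--bands, identifies $\partial S_i$ as a $(2, \alpha(i)q_i)$--torus link, and proves each such link is fibred directly from Stallings' fibration criterion (Theorem~\ref{thrm:Stallings}) via the linking-number homomorphism $\gamma \mapsto \Lk(\gamma, K_i)$, before applying \cite[Theorem 1]{Stallings_Constructions} to the $(n-1)$--fold sum. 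Your version trades that analytic input for a cleaner base case (the unknot bounds a disk) at the price of $m-(n-1)$ deplumbings instead of $n-1$ summands, and of a strictly more delicate decomposition: the paper's splitting sphere for $S_i \#_M S_{i+1}$ is essentially a pushed-off copy of the disk the two columns share, and all the bands of one column lie on one side of it, whereas your sphere must isolate a single band from other $\sigma_i$--bands lying in the \emph{same} gap between disks $i$ and $i+1$, with $\sigma_{i\pm1}$--bands interleaved along the word.

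That delicate step is exactly where the proposal stops, and it is the entire geometric content of the argument, so as written there is a genuine gap. You assert $S(w) = S(w'') \#_M H$ but produce neither the splitting sphere nor a plumbing square satisfying the definition; indeed the square $D$ you describe cannot be correct as stated. If $D$ is a neighbourhood in $S(w'')$ of the surviving band $b$ together with adjacent parts of disks $i$ and $i+1$, then the pair of opposite edges of $D$ lying along the two sides of $b$ belongs to $\partial S(w'')$, as required---but since $b \subseteq H$ and the sides of $b$ are free in $H$ as well, those same edges also lie in $\partial H$ rather than in $\inte(H)$, violating the condition that the edges of $D$ alternate between $\partial S_1 \cap \inte(S_2)$ and $\partial S_2 \cap \inte(S_1)$. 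The fix is to set the sum up so that $H$ and $S(w'')$ do not share a band: take $H$ to be the annulus swept out by the two chosen bands and two connecting strips hugging disks $i$ and $i+1$, and take $D$ to be a rectangle crossing that annulus, after which one must still exhibit the ball containing $H$ and check that the connecting strips miss every other band attachment (this is where consecutiveness of the two chosen occurrences of $\sigma_i$, and the fact that $\sigma_{i-1}$-- and $\sigma_{i+1}$--bands attach to the opposite sides of disks $i$ and $i+1$, are actually used). Until that verification is carried out---or until you reduce to the paper's coarser column decomposition and separately deplumb Hopf bands inside each $(2,q)$--torus-link fibre, where the picture is standard---the induction has no engine.
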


\begin{proof}
Let $K$ be a homogeneous link. Let $w = w_1 \cdots w_m \in \calB_n$ be a homogeneous braid word whose closure is $K$ and let
\[q_i \defeq | \{ j : w_j = \sigma_i^{\alpha(i)} \} |\]
count the number of occurrences of each generator in $w$. Let $v_i \defeq (\sigma_1^{\alpha(i)})^{q_i} \in \calB_2$ and $S_i \defeq S(v_i)$. Then $S(w)$ decomposes as the Murasugi sum: $S(w) = S_1 \#_M \cdots \#_M S_{n-1}$ and $K_i \defeq \partial S_i$ is the $(2, \alpha(i) q_i)$--torus link. A direct calculation using Theorem~\ref{thrm:Stallings} and the map
\[\gamma \mapsto \Lk(\gamma, K_i)\]
shows that each $K_i$ is fibred and its fibre is $S_i$. It is a Theorem of Stallings that if $K_1$ and $K_2$ fibred links with fibres $S_1$ and $S_2$ respectively and $S = S_1 \#_M S_2$ then $K = \partial S$ is a fibred link with fibre $S$ \cite[Theorem 1]{Stallings_Constructions}. Thus $\partial S(w) = \beta(w) = K$ is a fibred link with fibre $S(w)$.
\end{proof}

\section{The shift map}
\label{sec:Shift_Map}

The \emph{shift map} $s_i \from \calB_n \to \calB_{n-1}$ is the map which discards all $\sigma_i$ and $\sigma_i^{-1}$ twists in a braid word and moves later twists down by one strand. That is, if $w = w_1 \cdots w_m \in \calB_n$ is a braid word then $s_i(w) \defeq \widehat{w}_1 \cdots \widehat{w}_m \in \calB_{n-1}$ where 
\[
\widehat{w}_j \defeq 
\begin{cases}
\sigma_k^{\pm 1} & \textrm{if $w_j = \sigma_{k}^{\pm 1}$ and $k < i$,} \\
\epsilon & \textrm{if $w_j = \sigma_{k}^{\pm 1}$ and $k = i$,} \\
\sigma_{k-1}^{\pm 1} & \textrm{if $w_j = \sigma_k^{\pm 1}$ and $k > i$.}
\end{cases}
\]

\begin{example}
If $w = \sigma_1 \sigma_3 \sigma_5^{-1} \in \calB_6$ then 
\[s_1(w) = \sigma_2 \sigma_4^{-1} \quad \textrm{and} \quad s_4(w) = \sigma_1 \sigma_3 \sigma_4^{-1}.\]
\end{example}

In certain cases the closure of a braid word is invariant under the shift map.

\begin{definition}
A braid word $w = w_1 \cdots w_m \in \calB_n$ is \emph{$i$--weak} if $w_j$ is $\sigma_i$ or $\sigma_i^{-1}$ for exactly one value of $j$. A braid word is \emph{weak} if it is $i$--weak for some $i$.
\end{definition}

\begin{remark}
\label{rem:weak_words}
If $w = w_1 \cdots w_m \in \calB_n$ is a homogeneous braid word and $m < 2(n-1)$ then $w$ is weak.
\end{remark}

\begin{lemma}
\label{lem:shift_preserves_closure}
If $w$ is an $i$--weak braid word then $\beta(s_i(w)) = \beta(w)$.
\end{lemma}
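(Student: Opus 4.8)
The plan is to work directly with the closed braid diagram of the given word $w$: I would show that the single $\sigma_i^{\pm1}$ crossing it contains is nugatory, remove it by a Reidemeister~I move, and check that what is left over is precisely the closed braid diagram of $s_i(w)$. To set up, since $w$ is $i$--weak the generator $\sigma_i^{\pm1}$ occurs in $w$ exactly once, so write $w = u\,\sigma_i^{\epsilon}\,v$ with $\epsilon = \alpha(i)$ and with $u,v\in\calB_n$ containing no letter $\sigma_i^{\pm1}$. Draw the standard closed braid diagram $D$ of $w$: $n$ strands in positions $1,\dots,n$, one crossing for each letter stacked at successive heights, and closure arcs running off to one side, the arc at position $j$ nesting inside the arc at position $j'$ whenever $j>j'$. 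Let $c$ be the crossing coming from $\sigma_i^{\epsilon}$; it occurs between the two strands at positions $i$ and $i+1$.

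The key step is to observe that $c$ is a nugatory crossing. Because $\sigma_i^{\pm1}$ occurs nowhere else in $w$, the strands at positions $i$ and $i+1$ are never interchanged at any other height, so the vertical strip at position $i+\tfrac12$ inside the braid box is disjoint from $D$ except where the two strands involved in $c$ pass through it. Together with the region lying between the closure arcs at positions $i$ and $i+1$ — which contains no other closure arcs, since those at positions $1,\dots,i-1$ lie outside it and those at positions $i+2,\dots,n$ lie inside the arc at position $i+1$ — this strip forms a disk $\Delta$ whose interior misses $D$ and whose boundary meets $D$ only at $c$. Hence a Reidemeister~I move across $\Delta$ deletes $c$; geometrically this amounts to sliding the strand at position $i$ rightwards, carrying along all of its crossings with strands $1,\dots,i-1$, until it coincides with the strand at position $i+1$ — in effect a Markov destabilisation performed at strand $i$ rather than at strand $n$.

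It then remains to identify the resulting $(n-1)$--strand closed braid diagram. The crossing $c$ is gone; every crossing from a letter $\sigma_j^{\pm1}$ with $j<i$ is untouched; and every crossing from a letter $\sigma_j^{\pm1}$ with $j>i$ now lies between the strands in positions $j-1$ and $j$, that is, it reads as $\sigma_{j-1}^{\pm1}$. Comparing with the definition of the shift map, the diagram obtained is exactly the standard closed braid diagram of $s_i(w)$, whence $\beta(s_i(w)) = \beta(w)$.

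The one part that needs genuine care — and which I would make precise with a figure — is the claim that $\Delta$ is an embedded disk meeting $D$ only at $c$ in the manner of a nugatory crossing; granting that, the Reidemeister~I move and the subsequent bookkeeping are routine. I would also remark that the shift map is defined only on braid words and does not descend to $B_n$, so the argument must be carried out on the diagram of the given word $w$ rather than on a simplified braid representative; since every step above takes place on that diagram, this costs nothing.
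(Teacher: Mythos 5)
Your overall strategy is the same as the paper's: isolate the unique $\sigma_i^{\pm1}$ crossing $c$ in the closed-braid diagram, remove it, and identify the result as the standard diagram of $s_i(w)$ (the paper first uses Markov conjugation to move the letter $\sigma_i^{\epsilon}$ to one end of the word, then ``isotopes the right-hand side''). Your bookkeeping at the end is also correct. The gap is in the mechanism you give for removing $c$. First, the region $\Delta$ you describe --- a vertical strip across the braid box joined to the band between closure arcs $i$ and $i+1$ --- is an annulus, not a disk; its core is a circle meeting $D$ transversely in the single point $c$, which is exactly the statement that $c$ is a \emph{nugatory} crossing. But a nugatory crossing is not in general removable by a Reidemeister~I move: an R1 move needs a monogon at $c$ bounding an \emph{empty} disk, and here the disk bounded by that circle on the side of positions $i+1,\dots,n$ contains the entire sub-tangle on those strands --- every crossing coming from a letter $\sigma_j^{\pm1}$ with $j>i$, together with the closure arcs $i+1,\dots,n$. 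Already for $w=\sigma_1\sigma_2\sigma_2\in\calB_3$ the arc running from $c$ back to $c$ through its two right-hand ends passes through both $\sigma_2$ crossings, so there is no empty monogon and no R1 move; yet the lemma must handle this case. Removing a nugatory crossing requires rotating (or sliding) the whole enclosed tangle, and that operation does not simply ``delete $c$ and leave everything else untouched'' --- you must check that after dragging the sub-tangle on strands $i+1,\dots,n$ down one position the diagram you land on is the closed braid of $s_i(w)$.

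Your parenthetical alternative --- sliding so as to perform a Markov-type destabilisation at strand $i$ --- is the correct move, but it is inconsistent with the R1 description and is exactly the step that carries the real content. To make it precise, do what the paper does: conjugate $w$ cyclically (Markov type I, which commutes with $s_i$ letter-by-letter and preserves the closure) so that $w = \sigma_i^{\epsilon}u'$ with $u'$ containing no $\sigma_i^{\pm1}$; then $u'$ splits, by far commutativity of the generators, into a word on $\sigma_1,\dots,\sigma_{i-1}$ and a word on $\sigma_{i+1},\dots,\sigma_{n-1}$ occupying disjoint blocks of strands, and the closure arcs at positions $i$ and $i+1$ together with $c$ connect the two blocks in series. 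The isotopy that slides the right-hand block down one position then visibly produces the closure of $s_i(w)$. With the R1 language removed and that isotopy spelled out (or drawn), your argument becomes the paper's proof.
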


\begin{proof}
Without loss of generality, assume that $\sigma_i$ occurs exactly once in $w$. Furthermore, by performing Markov moves of type I on $w$ we may assume that $\beta(w)$ is as shown in Figure~\ref{fig:Braid1}. Here the ends of strands are connected if and only if they lie on the same vertical line and $\sigma_i$ is the only visible crossing.
By isotoping the right hand side of the diagram, we obtain the link shown in Figure~\ref{fig:Braid2}. This is $\beta(s_i(\sigma))$.
\end{proof}

\begin{figure}[ht]
\begin{minipage}{0.45\linewidth}
\centering
\includegraphics[width=\linewidth,height=0.8\linewidth]{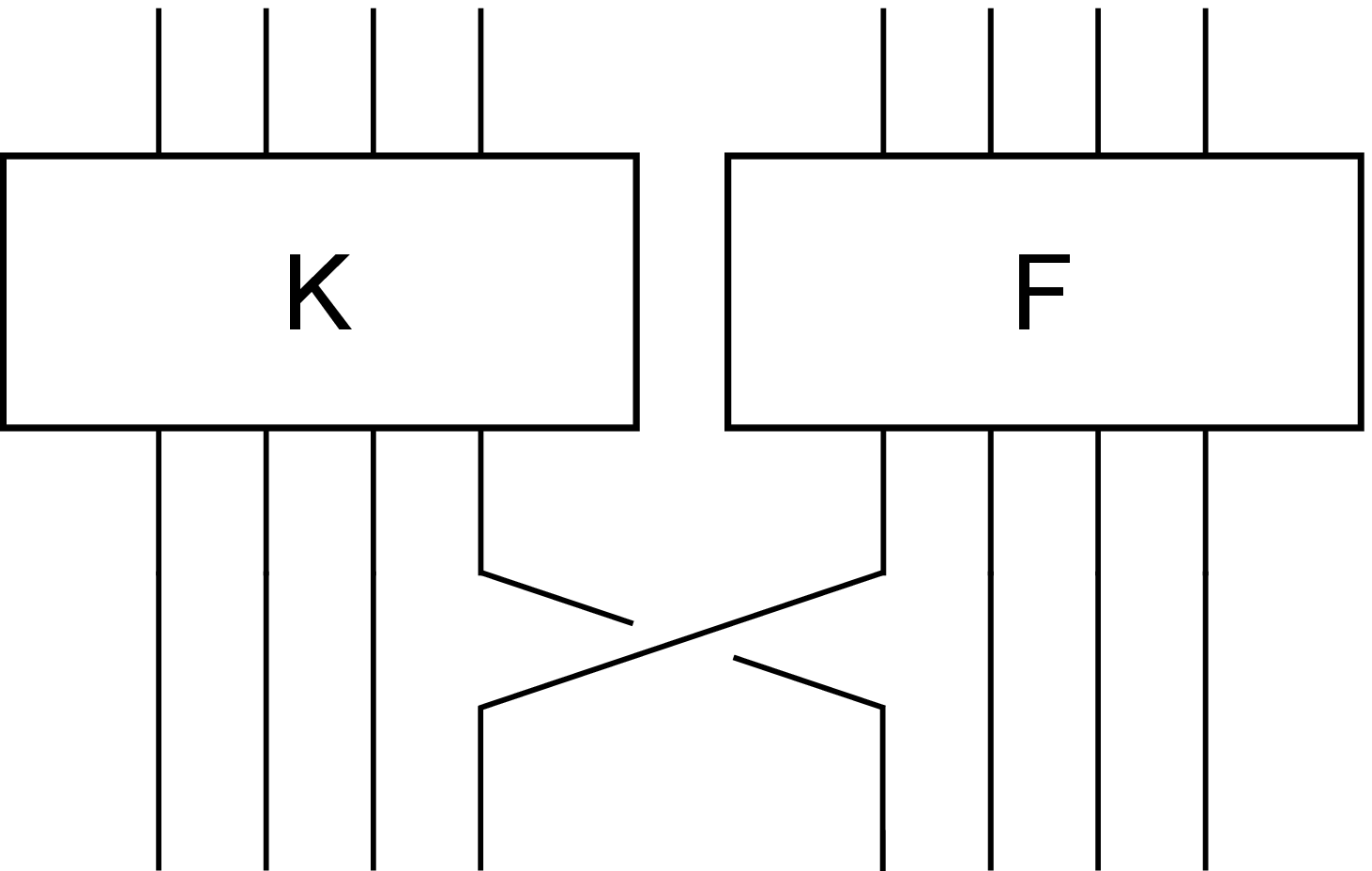}
\caption{The closure of an $i$--weak braid word.}
\label{fig:Braid1}
\end{minipage}
\quad
\begin{minipage}{0.45\linewidth}
\centering
\includegraphics[width=\linewidth,height=0.8\linewidth]{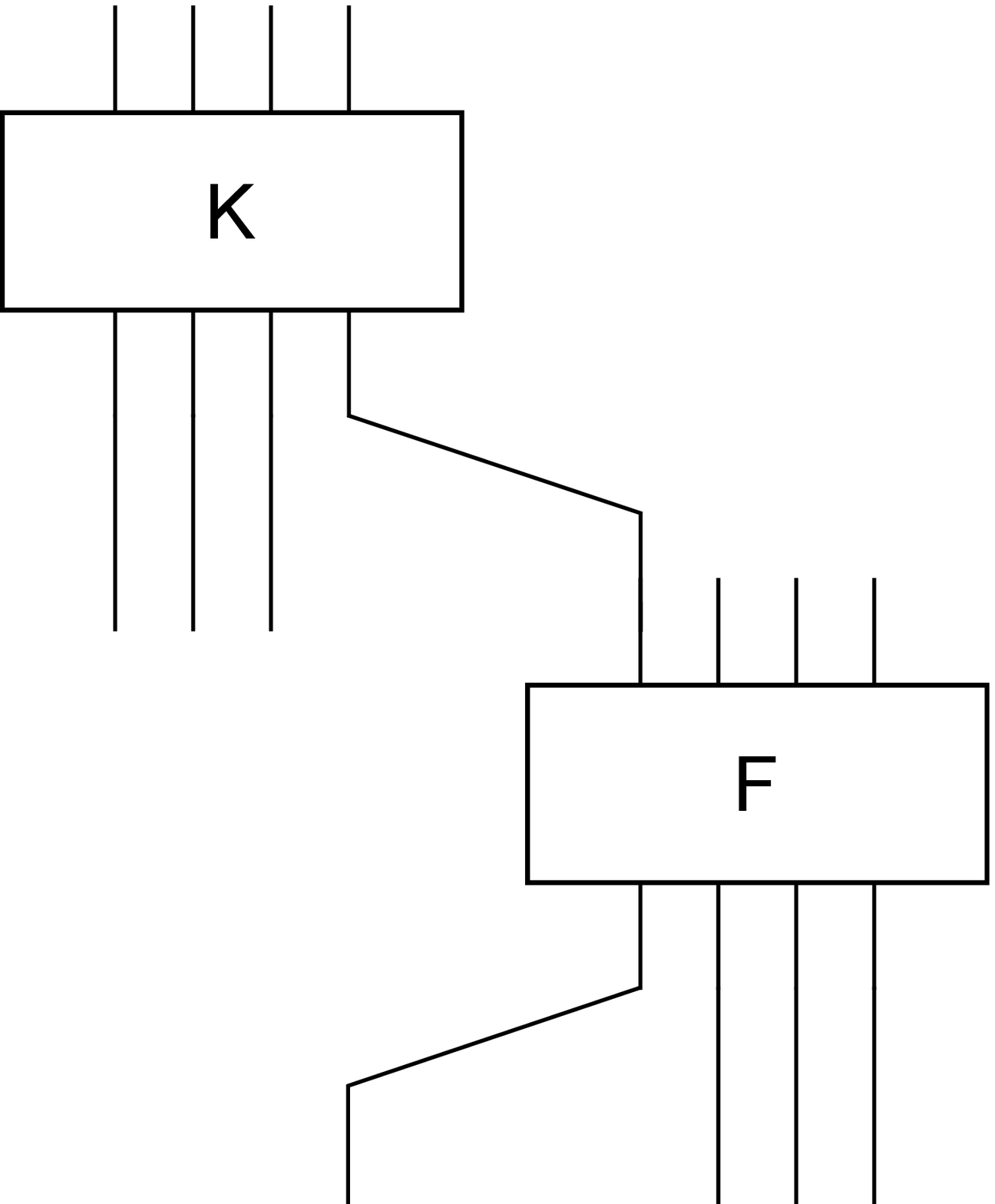}
\caption{The closure of an $i$--weak braid word after isotopy.}
\label{fig:Braid2}
\end{minipage}
\end{figure}

Hence we can use the shift maps to remove any weakness in a braid word representative of a link. Moreover, as the shift maps preserves homogeneity, we can also promote a homogeneous braid word to a homogeneous, non-weak braid word with the same closure.

\begin{proposition}
\label{prop:homo_non-weak_closure}
Every (homogeneous) link is the closure of a (homogeneous,) non-weak braid word. \qed
\end{proposition}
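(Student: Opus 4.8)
The plan is to use Lemma~\ref{lem:shift_preserves_closure} together with Remark~\ref{rem:weak_words} to run an iterative reduction: starting from any braid word representing the given link, repeatedly apply a shift map whenever the current word is weak, and argue that this process terminates at a non-weak word whose closure is still the original link. The homogeneous case rides along for free once I check that the shift map preserves homogeneity.

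First I would set up the induction. Let $w \in \calB_n$ be a braid word with $\beta(w) = K$. If $w$ is already non-weak we are done, so suppose $w$ is $i$--weak for some $i$. By Lemma~\ref{lem:shift_preserves_closure}, $\beta(s_i(w)) = \beta(w) = K$, and $s_i(w) \in \calB_{n-1}$. The key point is that the number of strands has strictly decreased, so this cannot continue forever: after finitely many steps we reach a braid word $w' \in \calB_{n'}$ (with $n' \geq 1$) that is not weak, and $\beta(w') = K$. One minor edge case: if the process reduces the strand number all the way down to $n' = 1$ or $n' = 2$ with very few letters, such a word is vacuously non-weak (there is no generator occurring exactly once in the relevant sense — or one checks directly that these short words are non-weak), so termination is genuine.

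For the parenthetical homogeneous statement I would observe that $s_i$ preserves homogeneity: the generator $\sigma_i$ and its inverse are simply deleted, and for $k \neq i$ every occurrence of $\sigma_k^{\pm 1}$ in $w$ becomes an occurrence of $\sigma_{k'}^{\pm 1}$ (with $k' = k$ or $k-1$) in $s_i(w)$ \emph{with the same sign}, so the exponent signs $\alpha$ are unchanged on the surviving generators. Hence if $w$ is homogeneous then so is $s_i(w)$, and the inductive reduction above stays within the class of homogeneous braid words throughout, yielding a homogeneous, non-weak representative.

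The only real thing to be careful about — and the step I expect to need the most attention — is the base case / termination argument: one must confirm that the reduction really does stop, i.e. that a braid word on a small number of strands is automatically non-weak (or, more robustly, that ``non-weak'' is the correct stopping condition and is reached), rather than cycling. Since each application of a shift map drops the strand count by one and the strand count is a nonnegative integer, termination is immediate; the content is just the bookkeeping that the final word is non-weak and still closes to $K$ (and is still homogeneous in the parenthetical case). Everything else is a direct appeal to Lemma~\ref{lem:shift_preserves_closure}.
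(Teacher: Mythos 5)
Your argument is correct and is exactly the paper's (the proposition is stated with an immediate \qed, justified by the preceding remark that shift maps remove weakness, preserve the closure by Lemma~\ref{lem:shift_preserves_closure}, and preserve homogeneity). Your termination argument via the strictly decreasing strand count and your check that $s_i$ preserves the exponent signs of the surviving generators supply precisely the bookkeeping the paper leaves implicit.
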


Recall that the \emph{Conway polynomial} \cite[page~105]{Kawauchi} $\Conway(K)(z)$ of a link $K$ is a polynomial invariant satisfying the skein relation
\[\Conway(K_+)(z) - \Conway(K_-)(z) = z \Conway(K_0)(z)\]
and the identity that
\[\Conway(\textrm{unknot})(z) = 1.\]
For a link $K$, we say that the \emph{degree} of $K$, $\deg(K)$, is the degree of the Conway polynomial of $K$; that is $\deg(K) \defeq \deg(\Conway(K))$. For ease of notation, for a braid word $w$ we abbreviate $\Conway(\beta(w))$ to $\Conway(w)$ and $\deg(\beta(w))$ to $\deg(w)$.

\begin{proposition}
\label{prop:homogeneous_degree_condition}
Let $w = w_1 \cdots w_m \in \calB_n$ be a homogeneous braid word. Then
\[\deg(w) = m - n + 1.\]
Moreover, the leading coefficient of $\Conway(w)$ is
\[\prod_{i=1}^n \alpha(i) \prod_{i=1}^m \alpha(x(i)).\]
\end{proposition}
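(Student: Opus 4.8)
The plan is to extract both statements from a single Seifert matrix of the fibre surface $S(w)$, organised along the Murasugi sum decomposition $S(w) = S_1 \#_M \cdots \#_M S_{n-1}$ already used in the proof of Theorem~\ref{thrm:homogeneous_fibred}. We may assume $S(w)$ is connected (equivalently, every generator $\sigma_1, \dots, \sigma_{n-1}$ occurs in $w$), since otherwise $\beta(w)$ is a split link. As $S(w)$ is built from $n$ disks and $m$ half-twisted bands we have $\chi(S(w)) = n - m$, hence $b_1(S(w)) = m - n + 1$. I would then invoke the classical description of the Conway polynomial via a Seifert matrix \cite{Kawauchi}: for any connected Seifert surface $S$ of a link $K$, with Seifert matrix $V$, one has $\deg \Conway(K) \le b_1(S)$, and the coefficient of $z^{b_1(S)}$ in $\Conway(K)$ equals $(-1)^{b_1(S)}\det V$; in particular $\deg \Conway(K) = b_1(S)$ as soon as $\det V \neq 0$. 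So everything reduces to computing $\det V$ for a suitable Seifert matrix of $S(w)$.

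For this I would use the plumbing structure. By Mayer--Vietoris $H_1(S(w)) \cong \bigoplus_{i=1}^{n-1} H_1(S_i)$, and the standard fact about Murasugi sums is that one can choose a basis, block by block along this splitting, in which $V$ is block triangular with $i$-th diagonal block a Seifert matrix $V_i$ of $S_i$ --- geometrically, a $1$--cycle of $S_i$ can be pushed off $S(w)$ to the side of the $i$-th plumbing sphere away from $S_1 \cup \cdots \cup S_{i-1}$, killing one of the two Seifert pairings between the summands. Consequently $\det V = \prod_{i=1}^{n-1} \det V_i$. Now $\partial S_i = K_i$ is the $(2, \alpha(i) q_i)$--torus link and $S_i$ its standard fibre surface, a Seifert matrix of which is the $(q_i - 1) \times (q_i - 1)$ bidiagonal matrix with $-\alpha(i)$ along the diagonal and $1$'s on one adjacent diagonal; hence $\det V_i = (-\alpha(i))^{q_i - 1}$. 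Multiplying, $\det V = \prod_{i=1}^{n-1}(-\alpha(i))^{q_i - 1} = (-1)^{\sum_i (q_i - 1)}\prod_{i=1}^{n-1}\alpha(i)^{q_i - 1} = (-1)^{m-n+1}\prod_{i=1}^{n-1}\alpha(i)^{q_i - 1}$, which is $\pm 1$.

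Since $\det V \neq 0$, the first claim follows: $\deg(w) = b_1(S(w)) = m - n + 1$. For the leading coefficient, it equals $(-1)^{m-n+1}\det V = \prod_{i=1}^{n-1}\alpha(i)^{q_i - 1}$; using $\alpha(i)^2 = 1$ this is $\bigl(\prod_{i=1}^{n-1}\alpha(i)\bigr)\bigl(\prod_{i=1}^{n-1}\alpha(i)^{q_i}\bigr)$, and since $\sigma_i$ accounts for exactly $q_i$ of the letters $w_j$ the second factor is $\prod_{j=1}^{m}\alpha(x(j))$, giving the asserted value.

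I expect the genuine work to be in the second paragraph: making precise that the iterated Murasugi sum really does furnish a basis putting $V$ simultaneously in block--triangular form with the torus-link blocks down the diagonal (the obvious loops running through consecutive bands do \emph{not} do this, so a change of basis must be exhibited, or the general plumbing result carefully cited), together with keeping every sign consistent --- the braid-generator-versus-crossing-sign convention, the normalisation of the Seifert form, and the sign in the top coefficient of $\Conway$. I would calibrate all of these against the unknot (words of length $n-1$), the two Hopf links ($\sigma_1^{\pm 2}$), the trefoil ($\sigma_1^{3}$) and the figure-eight knot ($\sigma_1\sigma_2^{-1}\sigma_1\sigma_2^{-1}$). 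As an alternative one can induct on $m$ with the skein relation --- deleting one occurrence of a generator that appears at least twice gives a shorter homogeneous word, while switching that crossing gives an inhomogeneous word --- but bounding the degree of the latter's Conway polynomial reduces again to observing that the corresponding diagonal block of its Seifert matrix becomes singular, so the computation above is not really avoided.
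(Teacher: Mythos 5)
Your argument is correct in outline but takes a genuinely different route from the paper. The paper proves both claims by a purely skein-theoretic induction: it orders homogeneous braid words by the tuple of generator multiplicities $(q_{n-1},\dots,q_1)$, uses the shift map (Lemma~\ref{lem:shift_preserves_closure}) to dispose of weak words, and otherwise rewrites $w$ by conjugation, far commutativity and the braid relation so that the skein relation can be applied at a doubled letter $\sigma_j^{\pm 2}$ or at a braid-relation triple while every word appearing on the right-hand side remains homogeneous and of strictly lower complexity. That last point is exactly the obstacle you flag at the end of your proposal (resolving a crossing of a homogeneous word naively produces an inhomogeneous word); the paper's identities \eqref{eqn:Conway1}--\eqref{eqn:Conway6} are engineered precisely to avoid it, so your dismissal of the skein route as ``not really avoiding'' the Seifert computation undersells what is actually done. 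Your route instead reads everything off from a Seifert matrix $V$ of the fibre surface: $b_1(S(w)) = m-n+1$, the top Conway coefficient is $\pm\det V$, and $\det V$ is multiplicative over the Murasugi sum because an adapted basis makes $V$ block triangular with the bidiagonal torus-link blocks on the diagonal. This is sound, and your final answer $\prod_{i=1}^{n-1}\alpha(i)^{q_i-1}$ agrees with the paper's formula (whose first product should be read as running to $n-1$, since $\alpha(n)$ is undefined); your approach is more conceptual and explains the leading coefficient as a determinant, at the cost of importing two nontrivial external facts --- the identification of the $z^{b_1}$-coefficient of $\Conway$ with $\pm\det V$ and Murasugi's block-triangularity lemma for plumbings --- which you would need to cite precisely or prove, whereas the paper's induction is elementary and self-contained apart from the skein relation itself.
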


\begin{proof}
First note that for any braid word $w$, when $j = i - 1$, the following identities hold:
\begin{eqnarray}
\Conway(w \sigma_i \sigma_i) &=& \Conway(w) + z \Conway(w \sigma_i) \label{eqn:Conway1} \\
\Conway(w \sigma_i^{-1} \sigma_i^{-1}) &=& \Conway(w) - z \Conway(w \sigma_i^{-1}) \label{eqn:Conway2} \\
\Conway(w \sigma_i \sigma_{j}^{-1} \sigma_i) &=& \Conway(w \sigma_{j}^{-1} \sigma_i \sigma_{j}^{-1}) + z \Conway(w \sigma_{j}^{-1} \sigma_i) + z \Conway(w \sigma_i \sigma_{j}^{-1}) \label{eqn:Conway3} \\
\Conway(w \sigma_i^{-1} \sigma_{j} \sigma_i^{-1}) &=& \Conway(w \sigma_{j} \sigma_i^{-1} \sigma_{j}) - z \Conway(w \sigma_{j} \sigma_i^{-1}) - z \Conway(w \sigma_i^{-1} \sigma_{j}) \label{eqn:Conway4} \\
\Conway(w \sigma_i \sigma_{j} \sigma_i) &=& \Conway(w \sigma_{j} \sigma_i \sigma_{j}) \label{eqn:Conway5} \\
\Conway(w \sigma_i^{-1} \sigma_{j}^{-1} \sigma_i^{-1}) &=& \Conway(w \sigma_{j}^{-1} \sigma_i^{-1} \sigma_{j}^{-1}) \label{eqn:Conway6}
\end{eqnarray}
The first four are direct results of the skein relation and the last two are direct results of the braid relation. Note that in each case if the braid on the left hand side is homogeneous then every braid used on the right hand side is too. Note also that if the result holds for the terms on the right hand side of an equation then it is also true for the term on the left hand side.

The \emph{complexity} of a homogeneous braid word $w = w_1 \cdots w_m \in \calB_n$ is the $(n-1)$--tuple $(q_{n-1}, \ldots q_1)$ where $q_j$ is the number of occurrences of $\sigma_{j}^{\alpha(j)}$ in $w$. We order the set of all homogeneous braid words short-lexicographically with respect to complexity. Note that in each of the above identities all of the terms on the right hand side of an equation are of lower complexity than of that on the left. We now proceed by induction on complexity. To deal with the base case, note that if the complexity of $w$ is $(1)$ then $w = \sigma_1^{\pm 1}$. In either case $\beta(w)$ is the unknot and so $\Conway(w) = 1$. Thus the result holds.

Now suppose that $q_j = 1$ for some $j$. Let $w' \defeq s_j(w)$ which is a braid word on one fewer strands with one fewer crossings. As $\beta(w) = \beta(w')$, by Lemma~\ref{lem:shift_preserves_closure}, $\deg(w) = \deg(w')$. By induction, as $w'$ is of lower complexity, 
\[\deg(w') = (m-1) - (n-1) + 1 = m - n + 1.\]
Moreover, as $q_j = 1$ there is a unique $k$ such that $x(k) = j$. By induction, the leading coefficient of $\Conway(w')$ is
\[ \prod_{\substack{i=1 \\ i \neq j}}^n \alpha(i) \prod_{\substack{i=1 \\ i \neq k}}^m \alpha(x(i)).\]
Hence as $\Conway(w) = \Conway(w')$ and $\alpha(j) \alpha(x(k)) = 1$, the leading coefficient of $\Conway(w)$ is
\[\prod_{i=1}^n \alpha(i) \prod_{i=1}^m \alpha(x(i)).\]
Thus the result holds. 

Thus, we now suppose that $q_j > 1$ for each $j$. In this case, by a discrete intermediate value theorem argument, $w$ can be written as:
\[w = w_1 \sigma_{j}^{\alpha(j)} w_2 \sigma_{j}^{\alpha(j)} w_3\]
such that $w_2$ contains no $\sigma_{j}^{\alpha(j)}$ terms, no $\sigma_{j+1}^{\alpha(j+1)}$ terms and at most one $\sigma_{j-1}^{\alpha(j-1)}$ term. Note that it is possible that $w_2 = \epsilon$.

If $\sigma_{j-1}^{\alpha(j-1)}$ does not occur in $w_2$ then let
\[w' \defeq w_3 w_1 w_2 \sigma_{j}^{\alpha(j)} \sigma_{j}^{\alpha(j)}.\] 
By far commutativity we have that $\beta(w) = \beta(w')$ and thus $\deg(w) = \deg(w')$. The result now follows by applying either equation \ref{eqn:Conway1} or \ref{eqn:Conway2} to $w'$.

Similarly, if $\sigma_{j-1}^{\alpha(j-1)}$ occurs exactly once in $w_2$ then
\[w_2 = w_4 \sigma_{j-1}^{\alpha(j-1)} w_5 \]
where $w_4$ and $w_5$ contain no $\sigma_{j-1}^{\alpha(j-1)}$ terms. Let
\[w' \defeq w_5 w_3 w_1 w_4 \sigma_{j}^{\alpha(j)} \sigma_{j-1}^{\alpha(j-1)} \sigma_{j}^{\alpha(j)}.\]
Then again $\beta(w) = \beta(w')$ and so $\deg(w) = \deg(w')$. The result now follows by applying either equation \ref{eqn:Conway3}, \ref{eqn:Conway4}, \ref{eqn:Conway5} or \ref{eqn:Conway6} to $w'$. 
\end{proof}

Dasbach and Mangum have shown a similar result for the HOMFLY polynomial of homogeneous links \cite[Proposition 4.1.1]{DasbachMangum}.

\begin{theorem}
\label{thrm:homogeneous_degree_finite}
For each $k \geq 0$ there are finitely many homogeneous links of degree $k$.
\end{theorem}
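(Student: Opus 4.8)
The plan is to combine the two main results established above: the degree formula of Proposition~\ref{prop:homogeneous_degree_condition} and the non-weak normal form of Proposition~\ref{prop:homo_non-weak_closure}. The former turns the degree into a purely combinatorial quantity (word length minus number of strands plus one), and the latter lets us assume our braid word is as short as possible relative to its number of strands. Together these should bound both the number of strands and the word length in terms of $k$, after which only finitely many braid words — and hence finitely many links — remain.

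In detail, fix $k \geq 0$ and let $K$ be a homogeneous link of degree $k$. By Proposition~\ref{prop:homo_non-weak_closure} we may write $K = \beta(w)$ for some homogeneous, non-weak braid word $w = w_1 \cdots w_m \in \calB_n$. Since $w$ is homogeneous, each $\sigma_i$ with $1 \leq i \leq n-1$ occurs in $w$ with a definite sign, so $q_i \geq 1$; since $w$ is non-weak, $q_i \neq 1$, hence $q_i \geq 2$. Therefore $m = \sum_{i=1}^{n-1} q_i \geq 2(n-1)$ (equivalently, this is the contrapositive of Remark~\ref{rem:weak_words}). On the other hand, Proposition~\ref{prop:homogeneous_degree_condition} gives $k = \deg(w) = m - n + 1$. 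Substituting $m \geq 2(n-1)$ yields $k \geq (2n-2) - n + 1 = n - 1$, so $n \leq k+1$ and then $m = n + k - 1 \leq 2k$.

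Thus every homogeneous link of degree $k$ is the closure of a braid word in $\calB_n$ of length $m$ with $n \leq k+1$ and $m \leq 2k$. There are finitely many such pairs $(n, m)$, and for each one there are at most $(2(n-1))^m$ braid words of that length in $\calB_n$. Taking braid closures of this finite list of words produces every homogeneous link of degree $k$, so there are only finitely many of them.

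Since the substantive content is carried by Propositions~\ref{prop:homogeneous_degree_condition} and~\ref{prop:homo_non-weak_closure}, little obstacle remains; the plan is essentially an elementary counting argument on top of them. The one point that must be handled with care is that the normal form of Proposition~\ref{prop:homo_non-weak_closure} is taken to be \emph{homogeneous} as well as non-weak, so that the degree formula of Proposition~\ref{prop:homogeneous_degree_condition} still applies to it; and that a homogeneous braid word, by definition, uses every generator $\sigma_1, \ldots, \sigma_{n-1}$, which is what forces $q_i \geq 1$ and hence — together with non-weakness — the bound $m \geq 2(n-1)$.
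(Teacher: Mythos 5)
Your proposal is correct and follows essentially the same route as the paper: reduce to a homogeneous, non-weak braid word via Proposition~\ref{prop:homo_non-weak_closure}, use $m \geq 2(n-1)$ (Remark~\ref{rem:weak_words}) together with $k = m - n + 1$ from Proposition~\ref{prop:homogeneous_degree_condition} to bound $n$ and $m$, and conclude by finiteness of the resulting set of braid words. Your explicit justification that homogeneity forces $q_i \geq 1$ and non-weakness forces $q_i \geq 2$ is exactly the content of the Remark the paper cites.
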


\begin{proof}
Let $w = w_1 \cdots w_m \in \calB_n$ be a homogeneous, non-weak braid word such that $\beta(w)$ is a link of degree $k$. As $w$ is a non-weak braid word, $m \geq 2(n-1)$ by Remark~\ref{rem:weak_words} and by Proposition~\ref{prop:homogeneous_degree_condition}
\[k = m - n + 1.\]
Combining these we obtain that:
\[2 \leq n \leq k + 1 \quad \textrm{and} \quad m = k + n - 1. \]
Hence, there are only finitely many homogeneous, non-weak braid words whose closure is a link of degree $k$ and so, by the homogeneous version of Proposition~\ref{prop:homo_non-weak_closure}, only finitely many such links.
\end{proof}

We can in fact provide a more exact bound. Let $p(k)$ be the number of homogeneous links of degree $k$. Note that if we fix the exponent of each generator of $\calB_n$ then there are $(n-1)^m$ possible braid words of length $m$. Hence there are at most $2^{n-1} (n-1)^m$ homogeneous braid words of length $m$ in $\calB_n$. Thus $p(0) = 1$ and when $k > 0$,
\[p(k) \leq \sum_{n = 1}^{k} 2^n n^{k+n}.\]
Hence $p(0) = 1$, $p(1) \leq 2$, $p(2) \leq 66$ and $p(3) \leq 5962$. A more careful count taking symmetries into account shows that $p(2) \leq 10$ and $p(3) \leq 22$. Explicitly checking these gives:

\begin{corollary}
\label{cor:small_degree_links}
Up to mirror-reflection, if a homogeneous link is:
\begin{itemize}
\item of degree zero then it is the unknot,
\item of degree one then it is the Hopf link,
\item of degree two then it is either: the trefoil, the figure-eight knot or the 3--component chain (that is $\beta(\sigma_1 \sigma_1 \sigma_2 \sigma_2 \sigma_3 \sigma_3)$), or
\item of degree three then it is the closure of either:
	\begin{itemize}
	\item $\sigma_1 \sigma_1 \sigma_1 \sigma_1$,
	\item $\sigma_1 \sigma_1 \sigma_1 \sigma_2 \sigma_2$,
	\item $\sigma_1 \sigma_1 \sigma_2 \sigma_2 \sigma_3 \sigma_3$,
	\item $\sigma_1 \sigma_1 \sigma_2 \sigma_3 \sigma_3 \sigma_2$,
	\item $\sigma_1 \sigma_1 \sigma_2 \sigma_3^{-1} \sigma_2 \sigma_3^{-1}$, or
	\item $\sigma_1 \sigma_2^{-1} \sigma_1 \sigma_3 \sigma_2^{-1} \sigma_3$. \qed
	\end{itemize}
\end{itemize}
\end{corollary}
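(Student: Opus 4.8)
The plan is to reduce the statement to a finite check, combining the bounds from the proof of Theorem~\ref{thrm:homogeneous_degree_finite} with the symmetries of braid closures. By the homogeneous form of Proposition~\ref{prop:homo_non-weak_closure} every homogeneous link of degree $k$ is $\beta(w)$ for some homogeneous, non-weak $w = w_1\cdots w_m \in \calB_n$, and then Remark~\ref{rem:weak_words} together with Proposition~\ref{prop:homogeneous_degree_condition} force $2 \le n \le k+1$, $m = k+n-1$, and each of $\sigma_1,\dots,\sigma_{n-1}$ to occur at least twice in $w$. For $k \le 3$ this leaves only the pairs $(n,m)$ equal to $(1,0)$ when $k=0$; $(2,2)$ when $k=1$; $(2,3)$ and $(3,4)$ when $k=2$; and $(2,4)$, $(3,5)$, $(4,6)$ when $k=3$ --- and in the cases $(3,4)$ and $(4,6)$ every generator must occur exactly twice. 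The first step is to list, for each such pair, all homogeneous words in $\calB_n$ of length $m$ with this occurrence property.

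The second step is to collapse this raw list using the equivalences that preserve the closure up to mirror-reflection: cyclic permutation of the letters (Markov~I / conjugation in $B_n$), the braid relations, the flip automorphism $\sigma_i\mapsto\sigma_{n-i}$ of $B_n$ induced by rotating the closed-braid diagram, reversal of the letter order, and simultaneous inversion of all generators (mirror-reflection, which we are free to quotient out). I would also throw away any word that becomes weak after a Markov~I move, replacing it via Lemma~\ref{lem:shift_preserves_closure} by a representative on fewer strands. Carrying out this bookkeeping should bring the list down to at most $10$ representatives for $k=2$ and at most $22$ for $k=3$, matching the counts quoted before the statement, with the trivial word $\epsilon$ for $k=0$ and, up to mirror, $\sigma_1^2$ for $k=1$.

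The third step is to identify $\beta(w)$ for each surviving word $w$. Several invariants are essentially free: the number of components is the number of cycles of the permutation underlying $w$, and Proposition~\ref{prop:homogeneous_degree_condition} already supplies $\deg \Conway(w)$ and the leading coefficient of $\Conway(w)$. To pin down the link type I would compute the full Conway polynomial by iterating the skein identities \ref{eqn:Conway1}--\ref{eqn:Conway6}, compute one further invariant such as the determinant or the Jones polynomial, and simplify the closed-braid diagram by hand until it is recognisable in Rolfsen's table \cite{Rolfsen}; for the two- and three-component cases an explicit isotopy of the diagram usually suffices. Finally I would verify that the braid words displayed in the statement have pairwise distinct closures up to mirror, so the list is irredundant, and record which distinct words in the reduced list represent the same link (for example $\beta(\sigma_1\sigma_2\sigma_1\sigma_2)=\beta(\sigma_1^3)$ is the trefoil).

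The main difficulty is not a single hard step but the completeness and reliability of this finite computation: one must be certain that the symmetry reduction genuinely exhausts all homogeneous, non-weak words of the relevant length, so that no homogeneous link of degree $\le 3$ is overlooked, and one must correctly recognise each of the (at most $22$) resulting links. In practice I would run the $k=3$ enumeration on a computer --- generating all admissible words, quotienting by the group generated by the symmetries above, and separating the resulting links with a battery of polynomial invariants --- and reserve the hand computation for the final matching against named links.
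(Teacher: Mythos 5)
Your proposal is correct and follows essentially the same route as the paper, which proves this corollary by exactly the finite enumeration you describe: the bounds $2 \le n \le k+1$ and $m = k+n-1$ from the proof of Theorem~\ref{thrm:homogeneous_degree_finite} restrict to finitely many homogeneous, non-weak words, a symmetry reduction brings the count down to at most $10$ (for $k=2$) and $22$ (for $k=3$), and the surviving closures are identified by explicit checking. The paper records no further detail of that check, so your plan supplies at least as much justification as the original.
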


Recall that the \emph{genus} of a knot $K$ is the genus of a minimal genus \emph{Seifert surface}; that is a compact, connected, orientable surface $S$ such that $\partial S = K$. As the degree of a knot is bounded above by twice its genus \cite[Theorem 7.2.1]{Cromwell_Knots_Links}, a result similar to Theorem~\ref{thrm:homogeneous_degree_finite} regarding genus immediately follows.

\begin{theorem}
\label{thrm:homogeneous_genus_finite}
For each $g \geq 0$ there are finitely many homogeneous knots of genus $g$. \qed
\end{theorem}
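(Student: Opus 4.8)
The plan is to deduce Theorem~\ref{thrm:homogeneous_genus_finite} from the already-established Theorem~\ref{thrm:homogeneous_degree_finite} together with the standard inequality relating the Conway degree of a knot to its genus. First I would recall the inequality $\deg(K) \leq 2g(K)$, valid for any knot $K$, which follows from the fact that the Conway polynomial can be computed from a Seifert matrix of a minimal-genus Seifert surface (a surface of genus $g$ has first homology of rank $2g$, so the associated Alexander/Conway polynomial has degree at most $2g$); this is the cited result \cite[Theorem 7.2.1]{Cromwell_Knots_Links}. Crucially this only gives an \emph{upper} bound on the degree, so it does not by itself bound the genus in terms of the degree for an arbitrary knot.

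The key observation that makes the argument work is that a homogeneous knot is fibred, by Theorem~\ref{thrm:homogeneous_fibred}, and its fibre surface $S(w)$ is a minimal-genus Seifert surface. For a fibred knot the Conway polynomial is (up to sign and units) the characteristic polynomial of the monodromy acting on $H_1$ of the fibre, so it is \emph{monic} of degree exactly equal to the rank of $H_1(S(w))$, which is $2g(K)$. Hence for a homogeneous knot $K$ one has the exact equality $\deg(K) = 2 g(K)$, not merely an inequality. (Alternatively, and more in the spirit of the paper, for a homogeneous braid word $w \in \calB_n$ of length $m$ the fibre surface $S(w)$ deformation retracts to a wedge of $m - n + 1$ circles, so $g(S(w)) = (m-n+1)/2$ when $\partial S(w)$ is a knot, and this matches $\deg(w) = m-n+1$ from Proposition~\ref{prop:homogeneous_degree_condition}.)

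With this equality in hand the theorem is immediate: fix $g \geq 0$. Any homogeneous knot of genus $g$ has degree $\deg(K) = 2g$, so it is a homogeneous link of degree $2g$, and by Theorem~\ref{thrm:homogeneous_degree_finite} there are only finitely many such. I would write this up in two or three sentences.

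The only genuine obstacle is justifying the equality $\deg(K) = 2g(K)$ rather than the inequality; the inequality is classical but the reverse direction needs the fibredness of homogeneous knots (or the explicit Euler-characteristic computation for $S(w)$). Since both ingredients — $\deg(K)\le 2g(K)$ from \cite{Cromwell_Knots_Links} and the homology rank of $S(w)$ — are available from the material already developed, no new technical work is required; the remark in the excerpt that "the degree of a knot is bounded above by twice its genus" already signals that the authors intend to use just the inequality, which still suffices once one notes that a homogeneous knot of genus $g$ has, by its fibre surface $S(w)$ of genus $g$, a braid representative with $m - n + 1 = 2g$, hence degree $2g$ exactly.
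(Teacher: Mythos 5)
Your argument is correct and reduces the statement to Theorem~\ref{thrm:homogeneous_degree_finite} exactly as the paper does, but you have made it harder than necessary, and your stated reason for doing so rests on a small logical slip. The paper's entire proof is: since $\deg(K) \le 2g(K)$, a homogeneous knot of genus $g$ has degree lying in the finite set $\{0, 1, \ldots, 2g\}$, and by Theorem~\ref{thrm:homogeneous_degree_finite} each of those degrees accounts for only finitely many homogeneous links; a finite union of finite sets is finite. Your objection that the inequality ``does not by itself bound the genus in terms of the degree'' is true but irrelevant --- the direction you actually need is a bound on the \emph{degree} in terms of the \emph{genus}, and that is precisely what $\deg(K) \le 2g(K)$ supplies. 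Consequently the exact equality $\deg(K) = 2g(K)$ for homogeneous knots, which you spend most of the proposal justifying via fibredness, monicity of the Conway polynomial, and the Euler characteristic of $S(w)$, is correct but not needed for the finiteness statement. (The paper does record essentially your equality computation, $g(\beta(w)) = (1 + m - n)/2$ via minimality of the fibre surface, but only afterwards, as an \emph{alternative} geometric proof of the theorem --- which is the more accurate framing of what your extra work amounts to.)
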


Note that the corresponding statement about homogeneous links is false. For every $q$ the $(2, 2q)$--torus link is homogeneous but has genus zero.

Theorem~\ref{thrm:homogeneous_genus_finite} can also be proven geometrically. If $w = w_1 \cdots w_m \in \calB_n$ is a homogeneous braid word and $\beta(w)$ is a knot then its fibre is a minimal genus Seifert surface \cite[Proposition 4.1.10]{Kawauchi}. Hence the genus of $\beta(w)$ is $(1 + m - n) / 2$.

Again, we can provide a more exact bound. Let $n(g)$ be the number of homogeneous knots of genus $g$. Then, when $g > 0$, by the same arguement as before,
\[n(g) \leq \sum_{n=1}^{2g} 2^{n} n^{2g+n}.\]
By explicitly checking all homogeneous knots of genus at most two, we obtain the following corollary.

\begin{corollary}
\label{cor:small_genus_links}
Up to mirror-reflection, if a homogeneous knot is:
\begin{itemize}
\item of genus zero then it is the unknot,
\item of genus one then it is either: the trefoil or the figure-eight knot, or
\item of genus two then it is either: the $5_1$, $6_2$, $6_3$, $7_6$, $7_7$, $8_{12}$, $3_1 \# 3_1$, $3_1 \# r(3_1)$, $3_1 \# 4_1$ or $4_1 \# 4_1$ knot \cite[Appendix~C]{Rolfsen}. Here $r(3_1)$ is the reflection of the $3_1$ knot. \qed
\end{itemize}
\end{corollary}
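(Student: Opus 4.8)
The plan is to reduce the statement to a finite enumeration, exactly as in the proof of Theorem~\ref{thrm:homogeneous_genus_finite} and the discussion following it. Suppose $K$ is a homogeneous knot of genus $g \le 2$. By the homogeneous version of Proposition~\ref{prop:homo_non-weak_closure} we may write $K = \beta(w)$ for a homogeneous, non-weak braid word $w = w_1 \cdots w_m \in \calB_n$. Since $\beta(w)$ is a knot, by the discussion after Theorem~\ref{thrm:homogeneous_genus_finite} its fibre $S(w)$ is a minimal genus Seifert surface with $\chi(S(w)) = n - m$, so $g = (1 + m - n)/2$, that is $m = 2g + n - 1$. Non-weakness together with Remark~\ref{rem:weak_words} gives $m \ge 2(n-1)$, so $2 \le n \le 2g + 1 \le 5$ and $m = n + 2g - 1 \le 8$. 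It therefore suffices to list every homogeneous, non-weak $w \in \calB_n$ with $2 \le n \le 5$ and $m = n + 2g - 1$, compute the closure of each, discard those that are not knots or not of the target genus, and identify what remains.

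First I would trim the list using symmetries that preserve the knot type of the closure: conjugation in $B_n$, which is cyclic rotation of the braid word since $\beta(uv) = \beta(vu)$; the flip automorphism $\sigma_i \mapsto \sigma_{n-i}$, which reflects the braid diagram; reversal of the braid word, which reverses the oriented link and so fixes invertible knots; and mirror reflection $\sigma_i^{\pm 1} \mapsto \sigma_i^{\mp 1}$, which accounts for the ``up to mirror-reflection'' in the statement and halves the sign choices. Because a homogeneous, non-weak word has each generator occurring at least twice, the exponent multiset $(q_1, \ldots, q_{n-1})$ is forced: for $g = 2$ it is $(5)$ when $n = 2$, a permutation of $(2,4)$ or $(3,3)$ when $n = 3$, a permutation of $(2,2,3)$ when $n = 4$, and $(2,2,2,2)$ when $n = 5$. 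After the symmetry reductions only a short list of words survives in each case, far below the crude bound on $n(2)$ quoted above. The cases $g = 0$ and $g = 1$ are immediate: $n = 1$ forces the unknot, and for $g = 1$ one is left with $\sigma_1^3$ in $\calB_2$ and the words $\sigma_1 \sigma_2 \sigma_1 \sigma_2$ and $\sigma_1 \sigma_2^{-1} \sigma_1 \sigma_2^{-1}$ in $\calB_3$, whose closures are the trefoil and the figure-eight.

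For the genus-$2$ list I would identify each surviving closure by a handful of invariants matched against \cite[Appendix~C]{Rolfsen}: the underlying permutation of $w$ decides whether $\beta(w)$ is a knot at all; Proposition~\ref{prop:homogeneous_degree_condition} already fixes the Conway polynomial to have degree $4$ with a prescribed leading coefficient, and computing the full Conway (equivalently Alexander) polynomial together with, say, the Jones polynomial and the signature distinguishes the finitely many genus-$2$ candidates. The composite closures need a little extra care: a reducible Alexander polynomial flags a connect sum, and since $3_1 \# 3_1$ and $3_1 \# r(3_1)$ share an Alexander polynomial they must be separated by signature, which is additive under connect sum. Finally I would check that every knot in the claimed list actually occurs by exhibiting an explicit homogeneous braid word for it. This yields precisely $5_1$, $6_2$, $6_3$, $7_6$, $7_7$, $8_{12}$, $3_1 \# 3_1$, $3_1 \# r(3_1)$, $3_1 \# 4_1$ and $4_1 \# 4_1$.

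The one genuine obstacle is bookkeeping: organizing the symmetry reduction tightly enough that the genus-$2$ enumeration is checkable by hand or machine, and then reliably recognizing each resulting knot — in particular spotting the composite closures and never conflating distinct knots that happen to share a Conway polynomial. Conceptually nothing is needed beyond Propositions~\ref{prop:homo_non-weak_closure} and~\ref{prop:homogeneous_degree_condition}; all the content is in carrying out the finite computation correctly.
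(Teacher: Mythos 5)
Your proposal is correct and takes essentially the same route as the paper: the paper derives the finiteness bound from Proposition~\ref{prop:homo_non-weak_closure}, Remark~\ref{rem:weak_words} and the genus formula $g = (1+m-n)/2$ for the fibre surface, and then simply asserts the corollary follows ``by explicitly checking all homogeneous knots of genus at most two.'' Your write-up fills in how that finite check is organized (symmetry reductions, invariants to identify the closures), but adds no new idea beyond the paper's own argument.
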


\begin{corollary}
\label{cor:inhomogeneous_fibred_knots}
The $8_{20}$ knot is an inhomogeneous, fibred knot.
\end{corollary}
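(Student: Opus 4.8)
The plan is to deduce this directly from the classification of low-genus homogeneous knots in Corollary~\ref{cor:small_genus_links}. First I would record the two classical facts about the $8_{20}$ knot that the argument needs: it is fibred, and its Alexander polynomial is $\Delta_{8_{20}}(t) = t^2 - 2t + 3 - 2t^{-1} + t^{-2} = (t - 1 + t^{-1})^2$, equivalently its Conway polynomial is $\Conway(8_{20})(z) = (z^2+1)^2$, of degree four \cite{Rolfsen}. Since $8_{20}$ is fibred, a minimal-genus Seifert surface for it is a fibre surface, and the genus of a fibre surface of a knot is half the breadth of the Alexander polynomial \cite[Proposition 4.1.10]{Kawauchi}; hence $8_{20}$ has genus exactly two.

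Next I would argue by contradiction: suppose $8_{20}$ were homogeneous. Mirror-reflection interchanges $\sigma_i$ with $\sigma_i^{-1}$ and so sends a homogeneous braid word to a homogeneous braid word, so both $8_{20}$ and its mirror would be homogeneous knots of genus two. By Corollary~\ref{cor:small_genus_links} one of them would then have to be isotopic to one of $5_1$, $6_2$, $6_3$, $7_6$, $7_7$, $8_{12}$, $3_1 \# 3_1$, $3_1 \# r(3_1)$, $3_1 \# 4_1$ or $4_1 \# 4_1$. But $8_{20}$ is a prime knot \cite{Rolfsen}, which excludes the four composite possibilities, and the Alexander polynomial is invariant under mirror-reflection, which excludes the six prime ones: the determinants of $5_1, 6_2, 6_3, 7_6, 7_7, 8_{12}$ are $5, 11, 13, 19, 21, 29$ respectively, while $|\Delta_{8_{20}}(-1)| = 9$. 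This contradiction shows $8_{20}$ is inhomogeneous, and as it is fibred the corollary follows.

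The real content here is already carried by Corollary~\ref{cor:small_genus_links} (and behind it Theorem~\ref{thrm:homogeneous_genus_finite} together with the explicit enumeration), so no substantial obstacle remains; the only thing to be careful about is the bookkeeping, namely confirming against standard tables that $8_{20}$ is fibred, prime, and of genus two with the stated Alexander polynomial, and then checking it against each of the ten knots in the list. A cosmetic variant I would consider, to keep the comparison entirely self-contained, is to use a single mirror-invariant quantity such as the determinant for the six prime candidates (as above) together with primality for the composite ones, rather than invoking the full Alexander polynomial.
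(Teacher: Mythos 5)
Your proof is correct and follows essentially the same route as the paper: cite fibredness (due to Stallings), establish that $8_{20}$ has genus two, and then appeal to the classification of homogeneous genus-two knots (Corollary~\ref{cor:small_genus_links}, which is the list the paper intends, despite its citation of Corollary~\ref{cor:small_degree_links}). The only difference is that you make explicit the verification, via primality and the determinant, that $8_{20}$ is not one of the ten listed knots, a step the paper leaves implicit.
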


\begin{proof}
Stallings showed that the $8_{20}$ knot \cite[Appendix~C]{Rolfsen}, shown in Figure~\ref{fig:8_20}, is fibred \cite[pages~58~--~59]{Stallings_Constructions}. It is a genus two knot and so by Corollary~\ref{cor:small_degree_links}, it is inhomogeneous.
\end{proof}

\begin{figure}[ht]
\centering
\includegraphics[height=0.42\linewidth]{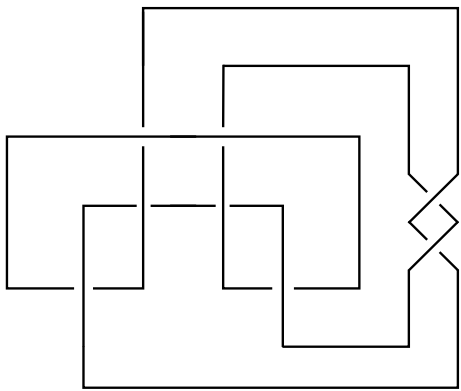}
\caption{The $8_{20}$ knot.}
\label{fig:8_20}
\end{figure}

\section{Monodromies}
\label{sec:Monodromies}

We now give an answer to Question~\ref{qst:determine_monodromy} for homogeneous link complements. A \emph{loop} on a surface $S$ is the image of a smooth embedding of $S^1$ into $S$. A loop inherits an orientation from an orientation on $S^1$. For two loops to be equal both their images and orientations must agree.

\begin{definition}
A collection of loops $\{\gamma_1, \ldots, \gamma_n\}$ on $S$ is in \emph{general position} if:
\begin{itemize}
	\item when $i$ and $j$ are distinct $\gamma_i \cap \gamma_j$ is a finite set,
	\item when $i$ and $j$ are distinct $\gamma_i \pitchfork \gamma_j$, and
	\item when $i$, $j$ and $k$ are distinct $\gamma_i \cap \gamma_j \cap \gamma_k = \emptyset$.
\end{itemize}
\end{definition}

\begin{definition}
A collection of loops $\{\gamma_1, \ldots, \gamma_n\}$ on $S$ is in \emph{minimal position} if $S - (\gamma_i \cup \gamma_j)$ does not contain any bigons for any $i$ and $j$.
\end{definition}

Note that we can always isotope any collection of loops on $S$ such that it is in both general and minimal position simultaneously.

\begin{definition}
An annulus $A$ embedded in $S$ is \emph{peripheral} if at least one boundary component of $A$ is a boundary component of $S$.
\end{definition}

\begin{definition}
A collection of loops $\{\gamma_1, \ldots, \gamma_n\}$ on $S$ \emph{fill} if $S - \bigcup_{i} \gamma_i$ is a disjoint collection of disks and peripheral annuli.
\end{definition}

\begin{definition}
A collection of loops $\{\gamma_1, \ldots, \gamma_n\}$ on $S$ is \emph{triangle-free} if when $i$, $j$ and $k$ are distinct at least one of $\gamma_i \cap \gamma_j$, $\gamma_i \cap \gamma_k$ or $\gamma_j \cap \gamma_k$ is empty
\end{definition}

The Alexander method says that a mapping class is uniquely determined by the image of a suitable collection of loops.

\begin{theorem}[{The Alexander Method \cite[Proposition 2.8]{FarbMargalit}}]
\label{thrm:Alexander}
Let $S$ be a surface and $\Gamma = \{\gamma_1, \ldots, \gamma_n\}$ a collection of loops on $S$ such that:
\begin{itemize}
\item $\Gamma$ is in general position, 
\item $\Gamma$ is in minimal position,
\item $\Gamma$ is triangle-free,
\item $\Gamma$ fills $S$, and
\item when $i$ and $j$ are distinct $\gamma_i$ and $\gamma_j$ are non-isotopic.
\end{itemize}
If two mapping classes $f$ and $g$ have representatives $\phi$ and $\psi$ respectively such that $\phi(\gamma_i) = \psi(\gamma_i)$ for each $i$ then $f = g$. \qed
\end{theorem}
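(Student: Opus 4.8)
The plan is to reduce the statement to the assertion that a self-homeomorphism of $S$ that fixes every $\gamma_i$ as an oriented loop is isotopic to the identity, and then to establish this by working separately on the union $\bigcup_i \gamma_i$ and on each of its complementary regions. First I would set $h \defeq \psi^{-1}\circ\phi$, a self-homeomorphism of $S$. Since $\phi$ and $\psi$ are orientation preserving and $\phi(\gamma_i)=\psi(\gamma_i)$ as oriented loops, we have $h(\gamma_i)=\gamma_i$ as an oriented loop for every $i$, and the conclusion $f=g$ is equivalent to $h\simeq\Id$. So it suffices to prove: if $h(\gamma_i)=\gamma_i$ (with its orientation) for all $i$, then $h$ is isotopic to the identity.

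Next I would regard $G \defeq \bigcup_i \gamma_i$ as a graph embedded in $S$, whose vertices are the pairwise intersection points and whose edges are the subarcs of the $\gamma_i$ joining consecutive vertices. General position makes every vertex a transverse crossing of exactly two of the $\gamma_i$, so $G$ is four-valent and each edge lies on a unique $\gamma_i$. The homeomorphism $h$ carries $G$ to $G$, and because it fixes each $\gamma_i$ with its orientation it permutes the cyclically ordered edges and vertices of $\gamma_i$ by a rotation. Here I would invoke the triangle-free, minimal-position, and pairwise-non-isotopic hypotheses to force every such rotation to be trivial: triangle-freeness makes the curves meeting a fixed $\gamma_i$ pairwise disjoint, and together with minimal position and non-isotopy this pins down the combinatorial type of each vertex of $\gamma_i$ — via the curve passing through it and that curve's pattern of intersections with all the others — rigidly enough that no nontrivial rotation can preserve it. Consequently $h$ fixes every vertex and maps every edge to itself preserving its endpoints, so an isotopy supported in a regular neighborhood of $G$ — straightening $h$ along each edge, which is possible because an orientation preserving self-map of a closed interval or circle is isotopic to the identity rel its endpoints — replaces $h$ by a homeomorphism, still called $h$, that fixes $G$ pointwise.

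Finally, since $h$ fixes $G$ pointwise it preserves each component $R$ of $S\setminus G$, and because $\Gamma$ fills $S$ the closure of each $R$ is either a polygonal disk with its sides on $G$ or a peripheral annulus with one boundary circle on $G$ and the other a component of $\partial S$. On a disk region $h$ fixes the boundary pointwise, so the Alexander lemma gives an isotopy rel boundary of $h$ to the identity there. On a peripheral annulus $h$ fixes the $G$-side boundary pointwise and maps the $\partial S$-side circle to itself preserving orientation, so $h$ is isotopic rel the $G$-side to a (possibly trivial) power of the Dehn twist about the core; but that core is isotopic to a boundary component of $S$, so this power of the twist is trivial in $\Mod(S)$. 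Carrying out these isotopies over all complementary regions, none of which disturbs $G$, yields $h\simeq\Id$, and hence $f=g$.

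I expect the genuine obstacle to be the second step: proving that $h$ acts trivially on the graph $G$. This is precisely where the minimal-position, triangle-free, and pairwise-non-isotopic hypotheses are all consumed, and turning the heuristic that ``each vertex is combinatorially rigid'' into an actual proof requires a careful case analysis of how pairs of the $\gamma_i$ can intersect and of how a nontrivial rotation of one of them would be forced to act on the others. The peripheral-annulus bookkeeping in the last step is a lesser technicality, sensitive to the precise convention adopted for $\Mod(S)$, but it presents no real difficulty.
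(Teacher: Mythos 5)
The paper does not prove this statement: it is imported verbatim from Farb--Margalit \cite[Proposition 2.8]{FarbMargalit} and stated with no argument, so there is no internal proof to compare yours against. Your outline is the standard proof of the Alexander method: reduce to showing that $h=\psi^{-1}\circ\phi$, which fixes each $\gamma_i$ setwise with orientation, is isotopic to the identity; rigidify $h$ on the graph $G=\bigcup_i\gamma_i$; then apply the Alexander lemma on the complementary disks and absorb the leftover twisting on the peripheral annuli. The endgame is handled correctly for this paper's conventions --- since $\Mod(S)$ here does not require isotopies to fix $\partial S$, a Dehn twist about a boundary-parallel curve is indeed trivial --- and the reduction in your first step is clean because the hypothesis is equality $\phi(\gamma_i)=\psi(\gamma_i)$ on the nose, not merely up to isotopy.

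However, the step you yourself flag as the genuine obstacle is a real gap, and the heuristic you offer does not close it. Triangle-freeness does make the curves crossing a fixed $\gamma_i$ pairwise disjoint, but labelling each vertex of $\gamma_i$ by the transversal curve passing through it does not pin the vertices down: if a single $\gamma_j$ crosses $\gamma_i$ in $k>1$ points, all $k$ vertices carry the same label, and a cyclic rotation of $\gamma_i$ permuting them is consistent with everything you have said. Symmetric filling configurations admitting such combinatorial rotations do exist, so one must actually rule out that the rotation is realized by an orientation-preserving homeomorphism fixing every $\gamma_i$ with its orientation; this is exactly the delicate point of the theorem, and it consumes the minimal-position and non-isotopy hypotheses in a way your sketch never does. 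Closing it requires a further idea --- for example, showing that $h$ must fix at least one complementary region (one adjacent to a distinguished boundary component, say) and propagating fixedness across regions sharing an edge, or analysing a putative nontrivial periodic symmetry of the pair $(S,\Gamma)$ directly. As written, your argument establishes the reduction and the finish but not the core rigidity claim on which both depend.
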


We begin by considering the case when $K$ is the $(2, q)$--torus link. For ease of argument we assume that $q > 0$, although the $q < 0$ case follows analogously. The link $K$ is homogeneous; $w \defeq \sigma_1^q \in \calB_2$ is a homogeneous braid word whose closure is $K$. Let $M$ be the complement of $K$. Then $M$ is a mapping torus with fibre $S(w)$ and monodromy $h$. Here $S(w)$ can be seen as two disks connected via $q$ half-twisted bands as shown in Figure~\ref{fig:Torus_SS}.

\begin{figure}[ht]
\centering
\includegraphics[width=0.4\linewidth,height=0.4\linewidth]{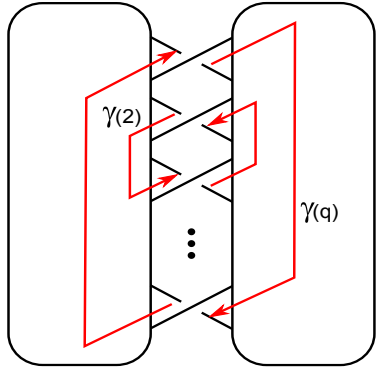}
\caption{The $(2, q)$--torus link.}
\label{fig:Torus_SS}
\end{figure}

For ease of notation, all addition and subtraction is done modulo $q$ throughout. Let $\gamma(i)$ be the loop on $S(w)$ starting on the right hand disk, passing to the left hand disk via the $i\nth$ half-twisted band and returning via the $(i+1)\fst$ one. By pushing this loop out of the positive side of $S(w)$ and returning it to $S(w)$ via the negative side, we see that $h(\gamma(i))$ is $\gamma(i-1)$ with the reversed orientation. Let $\Gamma \defeq \{\gamma(i)\}$ and note that this collection of curves satisfies the hypotheses of Theorem~\ref{thrm:Alexander}. Now consider
\[T(\gamma(q-1)) \circ \cdots \circ T(\gamma(1))\]
where $T(\gamma)$ is a left Dehn twist about $\gamma$. Then, up to isotopy,
\[T(\gamma(q-1)) \circ \cdots \circ T(\gamma(1)) (\gamma(i)) = h(\gamma(i))\]
and so by Theorem~\ref{thrm:Alexander}
\[h = T(\gamma(q-1)) \circ \cdots \circ T(\gamma(1)).\]
Note that if $q$ was negative then the monodromy would be 
\[h = (T(\gamma(q-1)) \circ \cdots \circ T(\gamma(1)))^{-1}.\]

\begin{corollary}
The monodromy of the $(2, q)$--torus link has order $\lcm(2, q)$. \qed
\end{corollary}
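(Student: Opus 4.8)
The plan is to analyze the order of the monodromy $h = T(\gamma(q-1)) \circ \cdots \circ T(\gamma(1))$ by exploiting the description of its action on the filling system $\Gamma = \{\gamma(i)\}$ that we already established: $h$ sends $\gamma(i)$ to $\gamma(i-1)$ with reversed orientation (indices mod $q$). Since $\Gamma$ satisfies the hypotheses of the Alexander Method (Theorem~\ref{thrm:Alexander}), any mapping class is pinned down by its effect on this collection, so it suffices to find the smallest positive $N$ such that $h^N(\gamma(i)) = \gamma(i)$ for every $i$ --- as \emph{oriented} loops, since equality of loops requires matching orientations.

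First I would compute $h^N(\gamma(i))$ directly from the rule. Applying $h$ once shifts the index down by one and flips orientation; applying it $N$ times shifts the index by $N$ and flips orientation $N$ times. So $h^N(\gamma(i)) = \gamma(i - N)$ with orientation reversed if $N$ is odd and preserved if $N$ is even. For this to equal $\gamma(i)$ as an oriented loop we need simultaneously $i - N \equiv i \pmod q$, i.e. $q \mid N$, and $N$ even. The smallest such $N$ is exactly $\lcm(2, q)$. One edge case worth a sentence: when $q$ is odd, $\gamma(i)$ and $\gamma(i)$-with-reversed-orientation are genuinely distinct loops (they are not isotopic preserving orientation, and in any case the Alexander Method is stated for loops where orientation matters), so the parity condition is not vacuous; when $q$ is even it is automatic that $q$ is even so $\lcm(2,q) = q$ and the orientation issue does not further constrain.

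To conclude that $h$ has order exactly $N = \lcm(2,q)$ rather than merely $h^N = \mathrm{Id}$, I would note two things. For $h^N = \mathrm{Id}$: since $h^N$ fixes each $\gamma(i)$ as an oriented loop, the Alexander Method gives $h^N = \mathrm{Id}$ in $\Mod(S(w))$. For minimality: if $h^M = \mathrm{Id}$ for some $0 < M < N$, then in particular $h^M(\gamma(0)) = \gamma(0)$ as oriented loops, forcing $q \mid M$ and $M$ even, contradicting $M < \lcm(2,q)$. The negative-$q$ case follows since $h^{-1}$ has the same order as $h$.

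The main obstacle, and the only place requiring genuine care, is verifying that the action of $h$ on $\Gamma$ is \emph{exactly} $\gamma(i) \mapsto \overline{\gamma(i-1)}$ on the nose rather than up to some correction --- but this is precisely the content of the computation already carried out in the text just before the corollary (pushing the loop off the positive side of $S(w)$ and back via the negative side), so we may quote it. A secondary point is making sure we are consistently treating the $\gamma(i)$ as oriented loops throughout, since dropping orientations would give the wrong (namely $q$, not $\lcm(2,q)$) answer when $q$ is odd; flagging this explicitly keeps the short argument honest.
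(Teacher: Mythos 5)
Your proposal is correct and is essentially the argument the paper intends: the corollary is stated as an immediate consequence of the preceding computation that $h(\gamma(i))$ is $\gamma(i-1)$ with reversed orientation, and your iteration of that rule, together with the Alexander method for the upper bound and the pairwise non-isotopy of the (oriented) $\gamma(i)$ for the lower bound, is exactly how that consequence is drawn. Your explicit flagging of the orientation-reversal issue (which is what forces the factor of $2$ when $q$ is odd) is a worthwhile clarification of a point the paper leaves implicit.
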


In the more general case, we use the following Theorem of Gabai who showed that Murasugi sums interact nicely with monodromies.

\begin{theorem}[{\cite[Corollary 1.4]{GabaiII}}]
\label{thrm:Murasugi_monodromy}
Suppose that $S = S_1 \#_M S_2$, $K = \partial S$ and $K_i = \partial S_i$. If for each $i$, $K_i$ is a fibred link with fibre $S_i$ and monodromy $h_i$ and $h_i|_{\partial S_i} = \Id$ then $K$ is a fibred link with fibre $S$ and 
\[h = h_1' \circ h_2',\]
where 
\[
h_i'(x) \defeq 
\begin{cases}
h_i(x) & \textrm{if $x \in S_i$,} \\
\Id(x) & \textrm{otherwise,}
\end{cases}
\]
is its monodromy. \qed
\end{theorem}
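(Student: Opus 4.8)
This is Gabai's theorem; the plan I would follow is to reconstruct its geometric content. Observe first that we already know $K = \partial S$ is fibred with fibre $S$ --- this is the theorem of Stallings used in the proof of Theorem~\ref{thrm:homogeneous_fibred} --- so $S^3 - n(K)$ is a mapping torus with fibre $S$, and once $S$ is fixed as the time-zero page it carries a well-defined monodromy $h \in \Mod(S)$. The content of the statement is therefore to identify $h$ with $h_1' \circ h_2'$, and the strategy is to build an explicit fibration $\pi \from S^3 - n(K) \to S^1$, assembled from the fibrations of $S^3 - n(K_1)$ and $S^3 - n(K_2)$, whose fibre is $S$ and whose first-return map is $h_1' \circ h_2'$.

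The construction would go as follows. Use the defining $2$--sphere $\Sigma = \partial B_1 = \partial B_2$, with $S_i \subseteq B_i$ and $S_i \cap \Sigma = D$; since $S \cap \Sigma = D$ lies in the interior of $S$, one may take $\Sigma$ disjoint from $n(K)$. Writing $S^3 - n(K_i) \isom M_{h_i}$ with fibre $S_i$, the hypothesis $h_i|_{\partial S_i} = \Id$ lets me choose these fibrations so that, in a collar $D \times [-1,1]$ of $D$ in $S^3$ (with $D \times [-1,0] \subseteq B_1$ and $D \times [0,1] \subseteq B_2$), each open book is in trivial product form near $D$, with $D = D \times \{0\}$ in the time-zero page. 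I would then patch: on $B_1$, away from $n(K_1)$ and from $D \times (0,1]$, take a reparametrization of the fibration of $S^3 - n(K_1)$; symmetrically in $B_2$; and interpolate across the collar using the product normal forms, arranged so the time-zero pages glue along $D$ to give precisely $S = S_1 \cup_D S_2$. The result is a locally trivial fibration with fibre $S$. Flowing a point once around the base, it is carried first by the $B_2$--part of the flow (which realizes $h_2$ on $S_2$ and the identity elsewhere) and then by the $B_1$--part (realizing $h_1$ on $S_1$ and the identity elsewhere), so the first-return map is $h_1' \circ h_2'$. (The order reflects the co-orientation of $\Sigma$; the opposite choice gives the conjugate map $h_2' \circ h_1'$, and hence the same mapping torus.) Comparing with the fibration furnished by Stallings' theorem gives $h = h_1' \circ h_2'$; alternatively, to bypass uniqueness of the monodromy, fix a filling, triangle-free, pairwise non-isotopic collection of loops on $S$ --- for instance a union of such collections on $S_1$ and $S_2$ --- and check that $h$ and $h_1' \circ h_2'$ agree on it via the Alexander Method (Theorem~\ref{thrm:Alexander}).

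The main obstacle is the patching step: one must check that the interpolation across $D \times [-1,1]$ can be done so that $\pi$ is genuinely a submersion with no critical points and is locally trivial, in particular along $D \times \{0\}$ and in a neighbourhood of the binding $K$. This is precisely where the hypothesis $h_i|_{\partial S_i} = \Id$ is essential --- without it the two page structures cannot be matched along $D$ --- and once the product normal forms near $D$ and near each $\partial S_i$ are in place it is essentially bookkeeping, though that is where all the care goes. An alternative that avoids the smoothing entirely is Gabai's original sutured-manifold argument: exhibit the Murasugi sum of the complementary sutured manifolds of the $S_i$ as the complementary sutured manifold of $S$, note that it is a product, and read the monodromy off the gluing. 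I would write up the explicit-fibration version, as it is the most direct.
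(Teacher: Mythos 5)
The paper offers no proof of this statement: it is quoted verbatim from Gabai (the fibredness assertion being Stallings' plumbing theorem), which is why it carries an immediate \qed. So there is nothing internal to compare your argument against; I can only assess it on its own terms. Your sketch is the standard ``splice the two open books along the splitting sphere'' argument and is correct in outline: one isotopes the fibration of $S^3 - n(K_i)$ so that inside the complementary ball $B_{3-i}$ it is the trivial product of copies of $D$ (possible because $S_i \cap B_{3-i} = D$ is a disk), reparametrizes so that the $B_2$--fibration runs during the first half of the circle and the $B_1$--fibration during the second, and reads off the first-return map as $h_1' \circ h_2'$. Two remarks. First, your attribution of the product normal form near $D$ to the hypothesis $h_i|_{\partial S_i} = \Id$ is slightly off: that hypothesis is automatic for the monodromy of a fibred link (it is the identity near the binding), and its real role in the statement is to make $h_i'$ a well-defined homeomorphism of $S$ --- continuity of ``$h_i$ on $S_i$, identity elsewhere'' across the edges of $D$ lying in $\partial S_i$ requires exactly this. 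The normalization in $B_{3-i}$ instead rests on $D$ being a disk meeting $\partial B_{3-i}$ correctly. Second, you are right that the remaining work is the local-triviality check along $D$ and the binding, and that Gabai's own proof proceeds instead through sutured manifolds (showing the complementary sutured manifold of $S$ is a product and identifying the product structure); either route is acceptable, and your explicit-fibration version is the one closest to how the present paper actually uses the result in Section~\ref{sec:Monodromies}.
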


Now let $K$ be a homogeneous link. Let $w = w_1 \cdots w_m \in \calB_n$ be a homogeneous, non-weak braid word whose closure is $K$. Let $M$ be the complement of $K$ with fibre $S(w)$ and monodromy $h$. As before we think of $S(w)$ as $n$ disks connected via $m$ half-twisted bands corresponding to the $w_i$. We index the $j\nth$ half-twisted band connecting from the $i\nth$ disk to the $(i+1)\fst$ disk by $b(i,j)$. Let $\gamma(i,j)$ be the loop on $S(w)$ which starts on the $i\nth$ disk, goes to the $(i+1)\fst$ disk via $b(i,j)$ and returns via $b(i,j+1)$.

Recall that
\[q_i \defeq | \{ j : w_j = \sigma_i^{\alpha(j)} \} |\]
is the number of occurrences of each generator in $w$ and $v_i \defeq (\sigma_1^{\alpha(i)})^{q_i} \in \calB_2$. Let $S_i = S(v_i)$ then $S(w)$ decomposes as the Murasugi sum: $S(w) = S_1 \#_M \cdots \#_M S_{n-1}$. Let $K_i \defeq \partial S_i$, which is the $(2, \alpha(i) q_i)$--torus link with fibre $S_i$. 

Let
\[h'_i \defeq (T(\gamma(i,q_i - 1)) \circ \cdots \circ T(\gamma(i,1)))^{\alpha(i)}. \]
Then $h'_i|_{S_i}$ is the monodromy of $K_i$ and $h'_i|_{S(w) - S_i} = \Id$. Thus by Theorem~\ref{thrm:Murasugi_monodromy}, $K$ has monodromy
\[h \defeq h_1' \circ \cdots \circ h_{n-1}'.\]

\begin{corollary}
If $w = w_1 \cdots w_m$ is a homogeneous braid word and $M \defeq S^3 - n(\beta(w))$ then the monodromy of $M$ can be written as a compositition of at most $m$ Dehn twists. \qed
\end{corollary}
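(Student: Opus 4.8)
The plan is to extract the bound directly from the monodromy formula $h = h'_1 \circ \cdots \circ h'_{n-1}$ established in the discussion above, after first replacing $w$ by a non-weak representative. So first I would apply the homogeneous version of Proposition~\ref{prop:homo_non-weak_closure} to obtain a homogeneous, non-weak braid word $w' = w'_1 \cdots w'_{m'} \in \calB_{n'}$ with $\beta(w') = \beta(w)$; since $M$ depends only on the link $\beta(w)$, it is unchanged. The key elementary point here is that each shift map deletes the unique occurrence of one generator and merely re-indexes the remaining letters, so it strictly decreases the length of a braid word; iterating to remove all weakness therefore gives $m' \le m$.

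Next I would invoke the formula of the preceding paragraph (valid for the non-weak word $w'$, with $S(w') = S_1 \#_M \cdots \#_M S_{n'-1}$ and Theorem~\ref{thrm:Murasugi_monodromy} supplying the monodromy): the monodromy of $M$ is
\[h = h'_1 \circ \cdots \circ h'_{n'-1}, \qquad h'_i = \bigl(T(\gamma(i,q_i-1)) \circ \cdots \circ T(\gamma(i,1))\bigr)^{\alpha(i)},\]
where $q_i$ is the number of occurrences of $\sigma_i^{\alpha(i)}$ in $w'$. When $\alpha(i) = 1$ this writes $h'_i$ as a composition of $q_i - 1$ left Dehn twists, and when $\alpha(i) = -1$ it writes $h'_i$ as a composition of $q_i - 1$ right Dehn twists, using that the inverse of a left Dehn twist about a curve is the right Dehn twist about the same curve; in either case $h'_i$ is a composition of $q_i - 1$ Dehn twists.

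Finally I would count. Since $w'$ is homogeneous, each of its $m'$ letters equals $\sigma_i^{\alpha(i)}$ for exactly one $i$, so $\sum_{i=1}^{n'-1} q_i = m'$, and hence $h$ is a composition of
\[\sum_{i=1}^{n'-1}(q_i - 1) \;=\; m' - (n'-1) \;\le\; m' \;\le\; m\]
Dehn twists, which is the assertion. I do not expect any real obstacle: all of the substance lies in the monodromy formula and in Theorem~\ref{thrm:Murasugi_monodromy} already in hand, and the only points needing a moment's attention are that an inverse Dehn twist is again a Dehn twist and that shifting to a non-weak braid word cannot increase its length.
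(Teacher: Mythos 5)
Your proposal is correct and follows the same route as the paper: the corollary is immediate from the formula $h = h_1' \circ \cdots \circ h_{n-1}'$ with each $h_i'$ a composition of $q_i - 1$ Dehn twists, giving $\sum_i (q_i - 1) = m - n + 1 \le m$ in total. Your extra care in reducing a possibly weak word to a non-weak one of no greater length is a reasonable (and needed) completion of the implicit argument, since the corollary as stated does not assume non-weakness.
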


\section*{Acknowledgments}

The author wishes to thank Saul Schleimer and Jessica Banks for helpful suggestions and corrections.

The author is supported by an Engineering and Physical Sciences Research Council (EPSRC) studentship.

\bibliographystyle{plain}
\bibliography{bibliography}

\end{document}